\documentclass[a4paper,  leqno,  myheadings,  11pt,  twoside]{article}
\usepackage{mathrsfs}
\usepackage{hyperref}

\usepackage{amsmath,  amsthm,  amssymb,  amscd,  amsxtra, graphicx}
\usepackage{latexsym,  amsfonts}

\setlength{\textheight}{215 mm} \setlength{\textwidth}{145 mm}
\setlength{\topmargin}{4 mm}
 \setlength{\oddsidemargin}{9 mm}
\setlength{\evensidemargin}{9 mm} \pagestyle{myheadings}

\DeclareMathOperator*{\esssup}{esssup}

\newcommand{\norm}[1]{\lVert#1\rVert}
\newcommand{\lnorm}[1]{{\lVert#1\rVert}_\infty}
\def\qc{quasi\-conformal }

\def\md{\mathbb{D}}
\def\mc{\mathbb{C}}
\def\msc{\mathscr{C}}
\def\msu{\mathscr{U}}
\def\mcn{\mathcal{N}}
\def\mcc{\mathcal{C}}

\def\mn{\mathbb{N}}

\def\T{Teich\-m\"ul\-ler }
\def\nde{non-decreasable }
\def\dcs{decreasable }

\def\nmu{\|\mu\|_\infty}

\def\nmu{\|\mu\|_\infty}

\def\wt{\widetilde}
\def\vp{\varphi}

\def\vp{\varphi}

\def\limn{\lim_{n\to\infty}}

\def\ov{\overline}

\def\de{\Delta}
\def\q1s{Q^1(S)}

\def\zs{Z(S)}

\def\qqde{Q(\de)}

\def\zde{Z(\de)}

\def\boz{[0]_Z}
\def\bmu{[\mu]_Z}
\def\bnu{[\nu]_Z}

 \makeatletter
\@addtoreset{equation}{section}

\newtheorem{theorem}{Theorem}
\newtheorem{cor}{Corollary}
\newtheorem{lemma}{Lemma}[section]

\newtheorem{theo}{Theorem}

\newtheorem*{question}{Question}

\newtheorem*{note}{Note}

\newtheorem{theoa}{Theorem A'\!\!}
\renewcommand{\thetheoa}

\newtheorem{theor}{Construction Theorem\!\!}
\renewcommand{\thetheor}

\markboth{\centerline{GUOWU YAO}}{\centerline{Non-uniqueness of infinitesimally weakly \nde extremal dilatations}}

\begin{document}

\title{\bf{Non-uniqueness of infinitesimally  weakly \nde extremal dilatations}
\author{GUOWU YAO
\\
Department of Mathematical Sciences,
  Tsinghua University\\ Beijing,  100084,
   People's Republic of
  China \\E-mail: \texttt{wallgreat@tsinghua.edu.cn}
}}
\maketitle
\begin{abstract}\noindent
In this paper, it is shown that a weakly \nde dilatation in an infinitesimal \T equivalence class can be not  a \nde one.  As an application, we prove that if an infinitesimal equivalence class contains more than one extremal dilatation, then it contains infinitely many weakly \nde extremal dilatations.
\end{abstract}
\renewcommand{\thefootnote}{}

\footnote{Keywords: \T space,
\qc map, weakly non-decreasable,  non-decreasable.}

\footnote{2010 \textit{Mathematics Subject Classification.} Primary
30C75;  30C62.}
\footnote{The  work was  supported by   the National Natural Science Foundation of China (Grant
No. 11271216).}

\section{\!\!\!\!\!{. }
Introduction}\label{S:intr}

Let $S$ be a plane domain with at least two boundary points.
Denote by $Bel(S)$ the Banach space of Beltrami differentials
$\mu=\mu(z)d\bar z/dz$ on $S$ with finite $L^{\infty}$-norm.

Let
$Q(S)$ be the Banach space  of integrable holomorphic quadratic differentials on $S$ with
$L^1-$norm
\begin{equation*}
\|\vp\|=\iint_{S}|\vp(z)|\, dxdy<\infty.
 \end{equation*}

Two Beltrami differentials $\mu$ and $\nu$ in $Bel(S)$ are said to
be infinitesimally \T equivalent if
\begin{equation*}\iint_S(\mu-\nu)\vp \, dxdy=0,  \text{
for any } \vp\in Q(S).
\end{equation*}
The infinitesimal \T space $\zs$  is defined as  the quotient space of $Bel(S)$ under the equivalence
relation.  Denote by $\bmu$ the equivalence class of $\mu$ in
$\zs$. Especially, we use $\mcn(S)$ to denote the set of Beltrami differentials in $Bel(S)$ that is equivalent to 0.

$\zs$ is a Banach space and its standard
sup-norm is defined by
\begin{equation*}
\|\bmu\|=\norm{\mu}:=\sup_{\vp\in \q1s}Re\iint_S \mu\vp \,
dxdy=\inf\{\|\nu\|_\infty:\,\nu\in\bmu\}.\end{equation*}
 We say that $\mu$ is extremal  (in $\bmu$) if $\nmu=\|\bmu\|$,
uniquely extremal if $\|\nu\|_\infty>\nmu$ for any other $\nu\in
\bmu$.

$\zs$ is a Banach space and  its standard
norm satisfies
\begin{equation*}
\|\bmu\|=\norm{\mu}:=\sup_{\vp\in \q1s}Re\iint_S \mu\vp \,
dxdy=\inf\{\|\nu\|_\infty:\,\nu\in\bmu\}.\end{equation*}
 We say that $\mu$ is extremal  (in $\bmu$) if $\nmu=\|\bmu\|$,
uniquely extremal if $\|\nu\|_\infty>\nmu$ for any other $\nu\in
\bmu$.

 A Beltrami differential $\mu$ (not necessarily extremal) is called to be  \emph{\nde} in its class $\bmu$ if  for $\nu\in \bmu$,
  \begin{equation}
  |\nu(z)|\leq|\mu(z)|\  a.e. \text{\ in } S,\end{equation}
implies that $\mu=\nu$; otherwise, $\mu$  is called to be \emph{decreasable}.

 The notion of \nde dilatation was firstly introduced by Reich in \cite{Re4} when he studied the unique extremality of \qc mappings.
 A uniquely extremal Beltrami differential is obviously non-decreasable.

 Let $\de$ denote the unit disk $\{z\in\mc:\;|z|<1\}$ and $\zde$ be  the infinitesimal \T space on $\de$.
 In \cite{SC}, Shen and Chen   proved  the following theorem.
 \begin{theo}\label{Th:sc}
For every $\mu\in Bel(\de)$, there exist infinitely many  \nde dilatations  in the  infinitesimal equivalence class $\bmu$ unless $\bmu=\boz$.
\end{theo}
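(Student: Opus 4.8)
\noindent\emph{Strategy.} My plan is to first recast non-decreasability additively: writing a competitor as $\nu=\mu+\eta$ with $\eta\in\mcn(\de)$, the dilatation $\mu$ is non-decreasable in $\bmu$ exactly when the only $\eta\in\mcn(\de)$ with $|\mu+\eta|\le|\mu|$ a.e. on $\de$ is $\eta=0$. The idea would then be to realize non-decreasable representatives as \emph{unique minimizers} of strictly convex quadratic functionals over the class. Fixing a positive continuous weight $w$ on $\de$ with $0<c\le w\le C$, putting $R:=\nmu$, and setting
\begin{equation*}
K_R:=\{\nu\in\bmu:\ \|\nu\|_\infty\le R\},\qquad J_w(\nu):=\iint_\de|\nu(z)|^2\,w(z)\,dxdy,
\end{equation*}
one has $\mu\in K_R$, so $K_R\ne\emptyset$; it is convex, $L^\infty$-bounded, and $L^2$-closed, the last point because if $\nu_n\to\nu$ in $L^2$ with $\|\nu_n\|_\infty\le R$ and $\nu_n\in\bmu$, then a subsequence converges a.e., giving $\|\nu\|_\infty\le R$, while dominated convergence against any $\vp\in\qqde$ yields $\iint_\de\nu\vp\,dxdy=\iint_\de\mu\vp\,dxdy$, i.e. $\nu\in\bmu$. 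Since $J_w$ is strictly convex and weakly lower semicontinuous on $L^2(\de)$, it attains its minimum over $K_R$ at a unique $\nu_w\in K_R\subset Bel(\de)$.

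The point is that every such minimizer is non-decreasable. Indeed, if $\eta\in\mcn(\de)$ satisfies $|\nu_w+\eta|\le|\nu_w|$ a.e., then $\nu_w+\eta\in\bmu$ and $\|\nu_w+\eta\|_\infty\le R$, so $\nu_w+\eta\in K_R$, while
\begin{equation*}
J_w(\nu_w+\eta)=\iint_\de|\nu_w+\eta|^2 w\,dxdy\ \le\ \iint_\de|\nu_w|^2 w\,dxdy=J_w(\nu_w);
\end{equation*}
uniqueness of the minimizer forces $\nu_w+\eta=\nu_w$, hence $\eta=0$. This settles existence and, by constraining to $K_R$, keeps the representative bounded --- which is exactly what is needed, since the unconstrained $L^2$-minimizer is essentially a Bergman projection of $\bar\mu$ and may leave $L^\infty$.

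For \emph{infinitely many} I would vary $w$ and split on whether the unconstrained minimizer is bounded. Let $\nu_w^\ast$ denote the minimizer of $J_w$ over the $L^2$-closure of $\bmu$, characterized by $\iint_\de\nu_w^\ast\,\bar\eta\,w\,dxdy=0$ for all $\eta\in\mcn(\de)$. If $\nu_w^\ast\notin L^\infty$ for some $w$, then for each finite $R\ge\|\bmu\|$ the constrained minimizer must saturate the bound, $\|\nu_w^R\|_\infty=R$ (otherwise it would be an interior, hence unconstrained, minimizer and thus equal to the unbounded $\nu_w^\ast$); as $R$ ranges over an interval these are pairwise distinct non-decreasable dilatations, and we are done. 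If instead $\nu_w^\ast\in L^\infty$ for every $w$, then each $\nu_w^\ast$ already lies in $\bmu$ and is non-decreasable by the squaring argument above; moreover $w\mapsto\nu_w^\ast$ cannot be constant, for a common value $\nu_0$ would satisfy $\iint_\de\nu_0\bar\eta\,w\,dxdy=0$ for \emph{all} weights $w$ and all $\eta\in\mcn(\de)$, forcing $\nu_0\bar\eta=0$ a.e. for every $\eta$ and hence $\nu_0=0$, i.e. $\bmu=\boz$. Since $\nu_w^\ast$ depends continuously on $w$ along segments of weights, its non-constant image is then an infinite family of non-decreasable dilatations.

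The crux --- and the step I expect to cost the most --- is the clause ``$\nu_0\bar\eta=0$ a.e. for every $\eta$ forces $\nu_0=0$'', together with the companion claim $\|\nu_w^R\|_\infty=R$ in the unbounded case. Both rest on $\mcn(\de)$ being rich enough to separate points of the class: concretely, for $g\in C_0^\infty(\de)$ one has $\partial_{\bar z}g\in\mcn(\de)$ by integration by parts against holomorphic $\vp$, and such elements can be made nonzero on arbitrarily large compacta, which is what upgrades $\nu_0\bar\eta=0$ to $\nu_0\equiv0$ and thereby pins the whole construction to the hypothesis $\bmu\ne\boz$. Verifying these separation properties of $\mcn(\de)$, and the routine continuity and strict-convexity estimates for the parametrized minimizers, are the remaining technical points I would need to supply.
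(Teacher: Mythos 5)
This statement is not proved in the paper at all: it is Shen--Chen's theorem, quoted from \cite{SC}, and the paper's only engagement with its proof is the remark that the construction in \cite{SC} actually yields the refined form recorded as Theorem A$'$ (non-decreasable representatives of arbitrarily large sup-norm). Your variational argument is therefore an independent route, and its core mechanism is correct: any $\eta\in\mcn(\de)$ witnessing decreasability of $\nu_w$ satisfies $|\nu_w+\eta|\le|\nu_w|$ a.e., so the competitor $\nu_w+\eta$ automatically stays in $K_R$ and does not increase $J_w$, and uniqueness of the minimizer of a strictly convex functional over a convex set forces $\eta=0$; hence every constrained minimizer is non-decreasable. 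Case A is also argued correctly (an interior minimizer satisfies $\mathrm{Re}\iint_\de\nu\bar\eta w\,dxdy=0$ for all $\eta\in\mcn(\de)$, hence coincides with the unique minimizer over the $L^2$-closure, contradicting its unboundedness), and it is worth noting that this case reproduces exactly the feature of Theorem A$'$: non-decreasable dilatations arising only with saturated norms $\lnorm{\nu_w^R}=R$. The separation property of $\mcn(\de)$ that you flag is fine as sketched: $\eta=\partial_{\bar z}(\chi\bar z)\in\mcn(\de)$ equals $1$ on any prescribed disk where the cut-off $\chi\equiv1$, which upgrades ``$\nu_0\bar\eta=0$ a.e.\ for all $\eta$'' to $\nu_0=0$; the Lipschitz dependence of $\nu_w^\ast$ on $w$ is likewise routine.

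The one genuine gap is the unsupported clause in Case B that ``each $\nu_w^\ast$ already lies in $\bmu$.'' What is needed is: a bounded element of the $L^2$-closure of $\bmu$ belongs to $\bmu$. This is not automatic, and your dominated-convergence argument (used for the $L^2$-closedness of $K_R$) does not transfer, because an $L^2$-approximating sequence $\eta_n\in\mcn(\de)$ for $\eta=\nu_w^\ast-\mu$ need not be uniformly bounded, and one cannot pass to the limit in $\iint_\de\eta_n\vp\,dxdy$ for a general $\vp\in\qqde$ since $\qqde\not\subset L^2(\de)$ (for instance $\vp(z)=(1-z)^{-3/2}$ lies in $\qqde$ but not in $L^2(\de)$). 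The fix is short but must be supplied: test first against the dilates $\vp_r(z):=\vp(rz)$, which lie in $\qqde\cap L^\infty(\de)\subset L^2(\de)$, so that $\iint_\de\eta\vp_r\,dxdy=\lim_n\iint_\de\eta_n\vp_r\,dxdy=0$; then let $r\to1^-$, using $\|\vp_r-\vp\|_{L^1}\to0$ together with the boundedness of $\eta$ to conclude $\iint_\de\eta\vp\,dxdy=0$. The same lemma is what legitimizes reading the orthogonality relation defining $\nu_w^\ast$ as characterizing a member of the class rather than merely of its closure. With that lemma inserted, your proof of Theorem~\ref{Th:sc} is complete.
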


 The author \cite{Yao2} proved that an infinitesimal  \T class may contain infinitely many \nde extremal dilatations. The existence of a \nde extremal in a class is generally unknown.

In \cite{ZZC}, Zhou et al. defined \emph{weakly \nde dilatation} in a \T equivalence class and proved that there always exists a weakly \nde extremal dilataion in a \T class.  Following their defintion, we say that $\mu\in Bel(S)$ is  a \emph{strongly \dcs dilatation} in $\bmu$ if there exists $\nu\in \bmu$ satisfying the following conditions:\\
(A) $|\nu(z)|\leq |\mu(z)|$ for almost all $z\in S$,\\
(B) There exists a domain $G\subset S$  and a positive number $\delta>0$ such that
\begin{equation*}
|\nu(z)|\leq |\mu(z)|-\delta, \text{ for almost all  } z\in G.
\end{equation*}
Otherwise, $\mu$ is called \emph{weakly non-decreasable}. In other words, a Beltrami differential $\mu$ is called weakly \nde if either $\mu$ is \nde or $\mu$ is decreasable but is not strongly decreasable. For the sake of mathematical precision,  we call a Beltrami differential $\mu$  to be a \emph{pseudo \nde dilatation} if it is a weakly \nde dilatation but  not a \nde dilatation.

In \cite{Yao11}, the author proved the following theorem.

\begin{theo}\label{Th:infweaknde}Suppose  $\bmu\in \zde$.  Then
 there is a weakly \nde extremal dilatation $\nu$ in $\bmu$.
\end{theo}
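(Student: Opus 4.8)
The plan is to obtain a weakly \nde extremal as the minimizer of a strictly convex integral functional over the collection of all extremal representatives of the class, exploiting that this collection is weak-* compact in $Bel(\de)=L^\infty(\de)$.

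Set $k=\|\bmu\|$. If $k=0$ then $\bmu=\boz$ and $0$ is (trivially) a \nde extremal, so assume $k>0$. Let $E=\{\nu\in\bmu:\ \|\nu\|_\infty=k\}$ denote the set of extremal dilatations in $\bmu$. First I would observe that $E=\bmu\cap\{\nu:\ \|\nu\|_\infty\le k\}$, since every $\nu\in\bmu$ satisfies $\|\nu\|_\infty\ge\|\bmu\|=k$. The class $\bmu$ is the common zero set of the weak-* continuous affine functionals $\nu\mapsto\iint_\de(\nu-\mu)\vp\,dxdy$, $\vp\in\qqde\subset L^1(\de)$, hence is weak-* closed; intersecting it with the weak-* compact ball $\{\nu:\ \|\nu\|_\infty\le k\}$ (Banach--Alaoglu) shows that $E$ is weak-* compact. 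A routine minimizing-sequence argument, using the weak-* lower semicontinuity of the norm, shows $E\neq\emptyset$.

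Next, since $\de$ has finite area, I would introduce the functional
\[
F(\nu)=\iint_\de|\nu(z)|^2\,dxdy,\qquad \nu\in E,
\]
which is finite ($F(\nu)\le\pi k^2$) on $E$. The key point is that $F$ is weak-* lower semicontinuous. This I would deduce from the pointwise identity $|\nu(z)|^2=\sup_{a\in\mc}\bigl(2\,\mathrm{Re}(\bar a\,\nu(z))-|a|^2\bigr)$: integrating it realizes $F$ as the supremum, over $a\in L^\infty(\de)$, of the affine functionals $\nu\mapsto\iint_\de\bigl(2\,\mathrm{Re}(\bar a\nu)-|a|^2\bigr)dxdy$, each of which is weak-* continuous because $\bar a\in L^1(\de)$. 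A supremum of weak-* continuous functions is weak-* lower semicontinuous, so $F$ attains its minimum on the weak-* compact set $E$; let $\nu_0$ be a minimizer.

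Finally I would check that $\nu_0$ is weakly \nde. If not, $\nu_0$ is strongly \dcs, so there is $\nu'\in\bmu$ with $|\nu'|\le|\nu_0|$ a.e. on $\de$ and $|\nu'|\le|\nu_0|-\delta$ a.e. on some domain $G\subset\de$, with $\delta>0$. The first inequality gives $\|\nu'\|_\infty\le\|\nu_0\|_\infty=k$, so $\nu'\in E$. On $G$ we have $|\nu_0|\ge\delta$, whence $|\nu'|^2\le(|\nu_0|-\delta)^2<|\nu_0|^2$ on the positive-measure set $G$, while $|\nu'|^2\le|\nu_0|^2$ elsewhere; integrating yields $F(\nu')<F(\nu_0)$, contradicting the minimality of $\nu_0$. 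Hence $\nu_0$ is a weakly \nde extremal dilatation in $\bmu$. The hard part will be the weak-* lower semicontinuity of $F$: because $\nu\mapsto|\nu|$ is not weak-* continuous, one cannot pass to the limit naively, and the convex-duality representation of $F$ as a supremum of weak-* continuous affine functionals is exactly what rescues the compactness argument.
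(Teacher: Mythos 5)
Your proof is correct, and it is genuinely different from the route the paper relies on. Note first that this paper does not reprove Theorem \ref{Th:infweaknde} at all: it quotes it from \cite{Yao11}, where it is obtained from the stronger statement recorded here as Lemma \ref{Th:lempseudo} (Theorem 2 of \cite{Yao11}); judging from how Section 4 invokes ``the proof of Lemma \ref{Th:lempseudo}'' (producing, from a strongly \dcs $\beta$, a weakly \nde $\beta'$ with $|\beta'|\le|\beta|$ a.e.\ and $|\beta'|\le|\beta|-\delta$ on a small round disk), that argument is an iterative construction in the spirit of Zhou--Zhang--Chen \cite{ZZC}, successively decreasing the dilatation on disks and passing to a limit. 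Your argument replaces this by a one-shot variational principle: the extremal set $E=\bmu\cap\{\nu:\lnorm{\nu}\le\|\bmu\|\}$ is weak-$*$ compact ($\bmu$ is cut out by pairings against $Q(\de)\subset L^1(\de)$, hence weak-$*$ closed, and Banach--Alaoglu applies), the functional $F(\nu)=\iint_\de|\nu|^2\,dxdy$ is weak-$*$ lower semicontinuous, and any minimizer of $F$ on $E$ must be weakly non-decreasable, since a witness $\nu'$ of strong decreasability automatically lies in $E$ (domination forces $\lnorm{\nu'}\le\lnorm{\nu_0}$, and membership in $\bmu$ forces the reverse) and satisfies $F(\nu')\le F(\nu_0)-\delta^2\,\mathrm{meas}(G)<F(\nu_0)$ because the open set $G$ has positive measure. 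The two delicate points are handled properly: $F$ is not weak-$*$ continuous, and your convex-duality representation $F(\nu)=\sup_{a\in L^\infty(\de)}\iint_\de\bigl(2\,\mathrm{Re}(\bar a\nu)-|a|^2\bigr)dxdy$ as a supremum of weak-$*$ continuous affine functionals is exactly the right fix. Two remarks on what each approach buys. Your method in fact yields Lemma \ref{Th:lempseudo} verbatim: the constraint set $\msu=\{\alpha\in\bmu:\ |\alpha|\le|\chi|\ \text{a.e.}\}$ is also weak-$*$ closed (it is an intersection of sets $\{\alpha:\,|\iint_\de\alpha f|\le\iint_\de|\chi||f|\},\ f\in L^1(\de)$), so minimizing $F$ over $\msu$ and running the identical contradiction gives a weakly \nde element of $\msu$; moreover, applying this with $\chi=\eta$, a witness of strong decreasability of a given $\beta$, recovers precisely the refined output of ``the proof of Lemma \ref{Th:lempseudo}'' that Section 4 needs. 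So your argument is shorter, self-contained, and flexible enough to support the rest of the paper; what the paper's citation buys is only that the constructive scheme was already available in the literature, at the cost of a considerably longer limiting argument.
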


In the end of \cite{Yao11}, the following question is posed.
\begin{question}  Whether a weakly \nde dilatation in $\bmu$ is a \nde one?
  \end{question}
In other words, the question is equivalent to ask whether there exists a pseudo \nde dilatation in some $\bmu$.

In this paper, we answer the question negatively at first.

\begin{theorem}\label{Th:infweak0}For any given $\lambda>0$, the basepoint $\boz$ contains infinitely many  pseudo \nde dilatations $\nu$  such that $\lnorm{\nu}=\lambda$ and  the support set of  each  $\nu$ in $\de$ has empty interior.  However, $0$ is the unique \nde dilatation in $\boz$.
\end{theorem}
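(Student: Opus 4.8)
The plan is to treat the two assertions separately, the uniqueness claim being essentially formal and the existence claim requiring an explicit radial construction.

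First I would dispose of the statement that $0$ is the only \nde dilatation in $\boz$. Since $0\in\boz$ and $|0(z)|=0\le|\mu(z)|$ a.e.\ for every $\mu$, the defining inequality of \nde-ness, applied to the competitor $\nu=0$, forces any \nde $\mu\in\boz$ to equal $0$; conversely $0$ is trivially \nde because $|\nu|\le 0$ a.e.\ gives $\nu=0$. This step needs only the definition and no further input.

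For the existence part I would produce the dilatations by a radial recipe. Fix a closed nowhere dense set $E\subset(0,1)$ of positive linear measure (a fat Cantor set), and for each integer $m\ge 1$ set
\[
\mu_m(z)=\lambda\,e^{im\theta}\,\chi_E(r),\qquad z=re^{i\theta},
\]
where $\chi_E$ is the indicator of $E$. Clearly $\lnorm{\mu_m}=\lambda$, while the support $\{z:|z|\in E\}$ is nowhere dense in $\de$ (hence has empty interior) yet carries positive area. The key algebraic point is that $\mu_m\in\mcn(\de)$: writing any $\vp\in\qqde$ as $\vp=\sum_{k\ge0}a_kz^k$, the angular integral of $\mu_m\vp$ over the circle $|z|=r$ involves $\int_0^{2\pi}e^{i(m+k)\theta}\,d\theta$, which vanishes for every $k\ge0$ because $m+k\ge1$; integrating in $r$ over $(0,\rho)$ and letting $\rho\to1$ then gives $\iint_{\de}\mu_m\vp\,dxdy=0$. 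Distinct values of $m$ give distinct functions, so this already yields infinitely many candidates with the prescribed norm and support.

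The heart of the matter is to check that each $\mu_m$ is weakly \nde but not \nde. Since $\mu_m\ne0$ and $0\in\boz$ with $|0|\le|\mu_m|$, the differential $\mu_m$ is decreasable, so it is not \nde. To see it is not \emph{strongly} decreasable, suppose some $\nu$ satisfied $|\nu|\le|\mu_m|-\delta$ a.e.\ on a domain $G$ with $\delta>0$; then $|\mu_m|\ge\delta$ a.e.\ on $G$, i.e.\ $G\setminus\{|z|\in E\}$ would be null, so the support would have full area in $G$. The main obstacle is to rule this out, and here the radial structure pays off: by Fubini in polar coordinates, full area of $\{|z|\in E\}$ in a small ball $B\subset G$ not containing the origin would force $E$ to have full linear measure in the radial interval swept out by $B$, contradicting the nowhere-density of $E$. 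Hence $\mu_m=0$ on a positive-measure subset of every domain, no such $G$ exists, and each $\mu_m$ is weakly \nde. Thus each $\mu_m$ is a pseudo \nde dilatation with $\lnorm{\mu_m}=\lambda$ and nowhere dense support, completing the construction.
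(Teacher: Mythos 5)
Your proposal is correct and takes essentially the same route as the paper: the paper's construction is exactly your $\mu_m$ (written there as $\lambda z^m/|z|^m$ on the radial set $\{re^{i\theta}:r\in \mcc\}$ over a nowhere dense compact $\mcc$ of positive measure), verified to lie in $\mcn(\de)$ by the same vanishing angular integrals, with non-decreasability failing because $0$ is a competitor and strong decreasability ruled out because the support has empty interior. The only cosmetic difference is that you spell out the Fubini argument against strong decreasability, which the paper leaves implicit.
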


By further applying Theorem \ref{Th:infweak0}, we prove the following results.

\begin{theorem}\label{Th:pseudo0}Suppose  $\bmu\in \zde$ and $\lambda>\norm{\bmu}$. Then $\bmu$ contains  infinitely many weakly \nde  dilatations $\nu$ with $\lnorm{\nu}\leq\lambda$.
\end{theorem}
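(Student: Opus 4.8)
The plan is to split according to whether $\bmu$ is the basepoint. If $\bmu=\boz$, then $\norm{\bmu}=0<\lambda$ and the assertion is \emph{exactly} Theorem \ref{Th:infweak0}, which already produces infinitely many weakly \nde (indeed pseudo \nde) dilatations in $\boz$ of prescribed norm $\le\lambda$. So assume $\bmu\neq\boz$ and put $k=\norm{\bmu}\in[0,\lambda)$. First I would fix a weakly \nde extremal representative $\nu_0\in\bmu$ (Theorem \ref{Th:infweaknde}), so that $\lnorm{\nu_0}=k$. Next, using Theorem \ref{Th:infweak0} with parameter $\lambda-k>0$, I would take a pseudo \nde differential $\sigma\in\boz$ with $\lnorm{\sigma}=\lambda-k$ whose support $E:=\mathrm{supp}\,\sigma$ is a closed set with empty interior, and set $\nu=\nu_0+\sigma$. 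Since $\sigma\in\mcn(\de)$ we have $\nu\in\bmu$, and since $\sigma$ vanishes off $E$ while $|\nu_0|\le k$, we get $\lnorm{\nu}\le\lambda$ (on $E$ one has $|\nu|\le k+(\lambda-k)=\lambda$). As Theorem \ref{Th:infweak0} supplies infinitely many such $\sigma$, the differentials $\nu=\nu_0+\sigma$ are pairwise distinct, giving infinitely many candidates.

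The heart of the matter is to verify that each $\nu$ is weakly \nde, and here I would exploit that $E$ has empty interior: on the dense open set $\de\setminus E$ one has $\nu=\nu_0$, so every nonempty open $G\subset\de$ meets $\de\setminus E$ in a nonempty open set $G'$ on which $\nu$ coincides with $\nu_0$. Consequently, were some $\eta\in\bnu=\bmu$ with $|\eta|\le|\nu|$ a.e.\ to witness strong decreasability, say $|\eta|\le|\nu|-\delta$ a.e.\ on $G$ for some $\delta>0$, then necessarily $|\eta|\le|\nu_0|-\delta$ a.e.\ on the open set $G'$. Thus any attempt to diminish $\nu$ uniformly on an open set is forced onto the locus where $\nu$ equals the extremal $\nu_0$, while the perturbation $\sigma$ itself can never be diminished on an open set at all.

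The main obstacle is that $\eta$ need not satisfy $|\eta|\le|\nu_0|$ on $E$, which has positive measure, so I cannot simply invoke the non-decreasability of $\nu_0$ on $G'$. To close this gap I would pair $\eta$ against a Hamilton sequence $\{\vp_n\}\subset\qde$ of $\nu_0$, for which $Re\iint_\de\nu_0\vp_n\,dxdy\to k$ and $\iint_\de|\vp_n|\,dxdy=1$. As $\eta-\nu_0\in\mcn(\de)$ we have $Re\iint_\de\eta\vp_n\,dxdy\to k$ as well, whereas the pointwise bounds above yield
\begin{equation*}
Re\iint_\de\eta\vp_n\,dxdy\le k-\delta\iint_{G'}|\vp_n|\,dxdy+(\lambda-k)\iint_{E}|\vp_n|\,dxdy .
\end{equation*}
A contradiction follows as soon as the excess mass $(\lambda-k)\iint_{E}|\vp_n|$ is dominated by the gain $\delta\iint_{G'}|\vp_n|$ along the sequence, which is the delicate point. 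I expect to arrange it by choosing, via Theorem \ref{Th:infweak0}, the support $E$ \emph{thin} in the sense that $\iint_{E}|\vp_n|\to0$, and by using the Reich--Strebel infinitesimal inequality to ensure that a genuine uniform decrease on the open set $G'$ contributes a non-vanishing amount along a suitable extremal sequence. Reconciling the freedom that $\eta$ enjoys on the positive-measure set $E$ with the rigidity of the extremal $\nu_0$ is where the real work lies; once this domination is secured, each $\nu$ is weakly \nde, and the theorem follows.
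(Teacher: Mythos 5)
Your reduction (treat $\boz$ via Theorem \ref{Th:infweak0}, otherwise perturb a weakly \nde extremal $\nu_0\in\bmu$ by pseudo \nde differentials $\sigma\in\boz$ of norm $\lambda-k$) is a reasonable plan, but the central claim --- that each $\nu=\nu_0+\sigma$ is weakly \nde --- is exactly the content of the theorem, and your proposal leaves it open; moreover the sketched Hamilton-sequence argument cannot close it. What your pairing yields is $\delta\iint_{G'}|\vp_n|\,dxdy\le(\lambda-k)\iint_{E}|\vp_n|\,dxdy+o(1)$, which is a \emph{constraint} on the hypothetical witness $(\eta,G,\delta)$, not an impossibility. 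If every Hamilton sequence of $\nu_0$ degenerates (all mass of $|\vp_n|$ escapes to $\pa\de$; this happens, e.g., for the uniquely extremal differentials of Reich's counterexample \cite{Re6}), then both integrals tend to $0$ ($E$ is compact, and $G'$ may be shrunk to be compactly contained), so the inequality is vacuous and no contradiction arises. If instead $\vp_n\to\vp\not\equiv0$ locally uniformly, then $\iint_E|\vp|\,dxdy>0$ automatically: $E$ must have positive Lebesgue measure (otherwise $\sigma=0$ a.e.) and $|\vp|>0$ a.e., so the ``thinness'' $\iint_E|\vp_n|\to0$ you hope to arrange via Theorem \ref{Th:infweak0} is unattainable, and a witness with sufficiently small $\delta$ is never excluded. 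In short, the method can bound how much $\nu$ can be decreased, but it cannot rule out decreasability.

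The paper avoids this trap precisely because it never claims the glued differential is weakly \nde. In the proof of Theorem \ref{Th:pseudo2} it (i) uses Lemma \ref{Th:cor} (available since $\lambda>\norm{\bmu}$ supplies a landslide representative) to get $\chi\in\bmu$, $\lnorm{\chi}\le\lambda$, vanishing on a disk $\de(\zeta,\rho)$; (ii) uses Lemma \ref{Th:lempseudo} to pass to a weakly \nde $\nu$ with $|\nu|\le|\chi|$, so $\nu\equiv0$ on that disk; (iii) inserts the pseudo \nde $\gamma$ of Theorem \ref{Th:infweak0} into a smaller disk $D\subset\de(\zeta,\rho)$, so the perturbation lives only where $\nu$ vanishes (your $\nu_0+\sigma$ superimposes $\sigma$ on the support of $\nu_0$, which makes the interaction even harder to control); and (iv) --- the step your proposal is missing --- when the glued $\beta$ is \emph{not} weakly \nde, it invokes the proof of Lemma \ref{Th:lempseudo} again to obtain a weakly \nde $\beta'$ below $\beta$, and proves the claim $\beta'|_D\notin[0|_D]_Z$ so that disjoint disks $D_n$ produce pairwise distinct weakly \nde dilatations. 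This fallback-plus-distinctness mechanism is what replaces any Hamilton-sequence estimate; without something like it, your construction has a genuine gap at its decisive step.
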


\begin{theorem}\label{Th:pseudo1}Let  $\bmu\in \zde$.  If the extremal in the point $\bmu\in\zde$ is not unique, then $\bmu$ contains infinitely many weakly \nde extremal dilatations.
\end{theorem}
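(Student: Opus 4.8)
The plan is to combine the non-uniqueness hypothesis, which supplies ``slack'', with the pseudo \nde building blocks in $\boz$ furnished by Theorem \ref{Th:infweak0}. Write $k=\norm{\bmu}$. Since the extremal is not unique, I would fix two distinct extremal dilatations $\mu_0,\mu_1\in\bmu$ with $\lnorm{\mu_0}=\lnorm{\mu_1}=k$ and set $\eta_0=\mu_1-\mu_0\in\mcn(\de)\setminus\{0\}$. By convexity every $\mu_t=(1-t)\mu_0+t\mu_1$ is again extremal, and the midpoint $\mu_*=\mu_{1/2}$ has strict slack: on the positive-measure set $D=\{z:\mu_0(z)\neq\mu_1(z)\}$ one has $|\mu_*(z)|<k$ a.e., so for each sufficiently small $\lambda>0$ the set $F_\lambda=\{z:|\mu_*(z)|\le k-\lambda\}$ has positive measure. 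By Theorem \ref{Th:infweaknde} the class $\bmu$ already contains at least one weakly \nde extremal dilatation; the goal is to manufacture infinitely many distinct ones.

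The construction I would carry out is to transplant the mechanism of Theorem \ref{Th:infweak0} into the slack set $F_\lambda$. For each small $\lambda>0$ I would produce, exactly as in the proof of Theorem \ref{Th:infweak0} but localized, a pseudo \nde dilatation $\eta\in\boz$ with $\lnorm{\eta}=\lambda$ whose (nowhere dense) support lies, up to a null set, inside $F_\lambda$. Then $\nu=\mu_*+\eta\in\bmu$ satisfies $|\nu|\le(k-\lambda)+\lambda=k$ on the support of $\eta$ and $|\nu|=|\mu_*|\le k$ off it, while $|\mu_*|$ still attains essential supremum $k$ off $F_\lambda$ and hence off the support of $\eta$; therefore $\lnorm{\nu}=k$ and $\nu$ is extremal. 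Letting $\lambda$ run through a null sequence, and using the infinitude of admissible $\eta$ guaranteed by Theorem \ref{Th:infweak0} for each $\lambda$, yields infinitely many pairwise distinct extremal $\nu$.

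It remains to verify that each such $\nu$ is weakly \nde, and this is where I expect the main difficulty to lie. Suppose $\nu$ were strongly \dcs: there is $\sigma\in\bmu$ with $|\sigma|\le|\nu|$ a.e.\ and $|\sigma|\le|\nu|-\delta$ on a subdomain $G$. Because the support of $\eta$ is nowhere dense, $G$ contains a subdomain $G'$ on which $\eta=0$, so there $\nu=\mu_*$ and $|\sigma|\le|\mu_*|-\delta$. The obstacle is that the inequality $|\sigma|\le|\nu|$ only yields $|\sigma|\le|\mu_*|+\lambda$ on the support of $\eta$, so $\sigma$ does not immediately strongly decrease $\mu_*$, nor does it directly contradict the pseudo \nde property of $\eta$, since the latter compares against $|\eta|$ rather than against $|\mu_*+\eta|$. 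Overcoming this requires exploiting the fine structure of the localized Theorem \ref{Th:infweak0} construction, namely that $|\eta|$ is essentially constant on its support and that the support is chosen so that the only way to realize a $\delta$-drop of $|\mu_*+\eta|$ on a subdomain is to realize it off the support, where it would force a forbidden strong decrease of $\mu_*$. Reconciling this local verification with the global constraint $\eta\in\boz$---an integral condition against all of $Q(\de)$---together with the placement of the nowhere-dense support inside the merely measurable set $F_\lambda$, is the technical heart of the argument; the extremality and distinctness of the $\nu$'s, by contrast, are routine once the construction is in place.
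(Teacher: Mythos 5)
Your proposal has a genuine gap, and it occurs exactly at the two places you flag as ``the technical heart.'' First, the transplanting step is unjustified: the construction of Theorem \ref{Th:infweak0} produces an element of $\boz$ supported on a \emph{rotationally symmetric} compact set, and it verifies membership in $\mcn(\de)$ by integrating $e^{i(m+n)\theta}$ over full circles. This argument localizes to a round disk $D$ (a differential annihilating $Q(D)$, extended by zero, annihilates $Q(\de)$), but it does not localize to your set $F_\lambda=\{|\mu_*|\le k-\lambda\}$, which is merely measurable and may well have empty interior --- precisely when $\mu_*$ (indeed every extremal) is non-landslide. You give no construction of a nonzero element of $\boz$ supported in such a set, and none is available from the paper's toolkit. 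Second, even granting such an $\eta$, your verification that $\nu=\mu_*+\eta$ is weakly \nde cannot close: a strong decrease of $\nu$ realized off the support of $\eta$ yields a strong decrease of $\mu_*$, but $\mu_*$ is just an arbitrary extremal (a midpoint of two extremals) with no weak non-decreasability property, so there is nothing ``forbidden'' about that. The base point to which one attaches $\gamma$ must itself already be weakly \nde and must vanish on a full disk neighborhood of the attachment site; the paper manufactures this via Lemma \ref{Th:cor} (a representative vanishing on a small disk, which needs slack on an \emph{open} set, i.e.\ a landslide dilatation) followed by Lemma \ref{Th:lempseudo}, and even then it must handle the case where the glued differential $\beta$ fails to be weakly \nde, by passing to a weakly \nde $\beta'$ below it and proving $\beta'|_D\notin[0|_D]_Z$ to preserve distinctness. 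Your proposal has no counterpart to either mechanism.

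The missing idea is the paper's landslide/non-landslide dichotomy, which is what makes both obstacles disappear. If some extremal in $\bmu$ is landslide, the slack is on an open set, so Theorem \ref{Th:pseudo2} (via Lemmas \ref{Th:cor}, \ref{Th:lempseudo} and the disk-localized Theorem \ref{Th:infweak0}) applies with $\lambda=\norm{\bmu}$, and every member of $\msu_\lambda$ is automatically extremal. If instead \emph{every} extremal is non-landslide, no construction is needed at all: a strong decrease of an extremal $\mu$ produces a competitor $\sigma\in\bmu$ with $\lnorm{\sigma}=\norm{\bmu}$ but $\esssup_{G}|\sigma|\le\norm{\bmu}-\delta$, i.e.\ a landslide extremal, contradicting the case hypothesis; hence every extremal is weakly \nde, and by convexity there are infinitely many of them. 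Your observation that the midpoint $\mu_*$ has strict slack on $\{\mu_0\neq\mu_1\}$ is correct but points in the wrong direction, because that slack set need not contain any open set, and positive measure alone is not enough to run the $\boz$-perturbation machinery.
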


Since Theorem 2 in \cite{Yao2} indicates that there exists $\bmu\in \zde$ such that $\bmu$ contains infinitely many extremals but only one \nde extremal, we have the following corollary.

\begin{cor}\label{Th:infweak1}
There exists a Beltrami differential  $\mu\in  Bel(\de)$ such that $\mu$ is the unique \nde extremal dilatation in $\bmu$ while $\bmu$ contains infinitely many pseudo \nde extremal dilatations.
\end{cor}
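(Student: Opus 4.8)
The plan is to combine the two ingredients explicitly flagged in the surrounding text: Theorem 2 of \cite{Yao2}, which supplies a class in $\zde$ having infinitely many extremals but only a single \nde extremal, and Theorem \ref{Th:pseudo1}, which upgrades non-unique extremality into the existence of infinitely many weakly \nde extremals. Once these are in hand, the whole corollary reduces to a classification-and-counting observation and requires no new analytic input.

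First I would invoke Theorem 2 of \cite{Yao2} to fix a class $\bmu\in\zde$ that contains infinitely many extremal dilatations and yet possesses exactly one \nde extremal; I would take that unique \nde extremal to be $\mu$ itself, so that the notation $\bmu$ is consistent. Since $\bmu$ contains infinitely many extremals, the extremal in $\bmu$ is in particular not unique in the sense defined in the introduction. The hypothesis of Theorem \ref{Th:pseudo1} is therefore met, and applying it yields infinitely many (pairwise distinct) weakly \nde extremal dilatations in $\bmu$.

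Next I would use the dichotomy built into the definitions: every weakly \nde dilatation is either \nde or pseudo \nde. Suppose $\nu\in\bmu$ is one of the weakly \nde extremals just produced. If $\nu$ were \nde, then by the uniqueness assertion from \cite{Yao2} we would have $\nu=\mu$; hence every such $\nu$ with $\nu\neq\mu$ is necessarily pseudo \nde. Deleting the single possible exception $\mu$ from an infinite set leaves an infinite set, so $\bmu$ contains infinitely many pseudo \nde extremal dilatations. Together with the fact that $\mu$ is the unique \nde extremal in $\bmu$, this is precisely the statement of the corollary.

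I do not anticipate a genuine obstacle, as the corollary is a direct synthesis of results already established; the only point demanding care is the bookkeeping that separates \nde extremals from pseudo \nde ones, and the verification that Theorem \ref{Th:pseudo1} applies verbatim to the class furnished by \cite{Yao2}. The latter is immediate once one confirms that the phrase ``infinitely many extremals'' in \cite{Yao2} forces non-unique extremality in the exact sense used by Theorem \ref{Th:pseudo1}, which follows at once from the definition of unique extremality recalled in Section \ref{S:intr}.
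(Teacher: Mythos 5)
Your proposal is correct and matches the paper's own derivation: the corollary is obtained exactly by applying Theorem \ref{Th:pseudo1} to the class furnished by Theorem 2 of \cite{Yao2} (infinitely many extremals, only one \nde extremal), and then noting that each weakly \nde extremal other than the unique \nde one must be pseudo \nde. The dichotomy and counting step you spell out is precisely the implicit content of the paper's one-sentence justification preceding the corollary.
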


By use of some technique in \cite{Re6,Yao2,Yao3}, we can obtain the following interesting theorem.

\begin{theorem}\label{Th:infweak2}There exists an extremal Beltrami differential  $\mu\in Bel(\de)$ such that $\bmu$ contains infinitely many \nde extremal dilatations and $\bmu$ contains infinitely many  pseudo \nde extremal dilatations.
\end{theorem}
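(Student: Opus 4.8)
The plan is to exhibit a single class in $\zde$ that carries both phenomena at once, by superposing on two complementary pieces of $\de$ the construction of \cite{Yao2}, which furnishes the \nde extremals, and the basepoint construction of Theorem \ref{Th:infweak0}, which furnishes the pseudo \nde ones. First I would adapt the construction of \cite{Yao2} (this is where the techniques of \cite{Re6,Yao2,Yao3} enter) to produce an extremal $\mu_0\in Bel(\de)$ with $\lnorm{\mu_0}=\norm{[\mu_0]_Z}=:\lambda$, whose class $[\mu_0]_Z$ already contains infinitely many \nde extremals, and with the extra feature that $\mu_0\equiv 0$ on some disk $U\subsetneq\de$; write $\de_1:=\de\setminus\ov U$ for its support. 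These \nde extremals supplied by \cite{Yao2} serve verbatim as the first infinite family required by the theorem. On $U$ I would transplant Theorem \ref{Th:infweak0} by a conformal map $\de\to U$: for each $m$ it yields a \emph{nonzero} $\beta_m\in\mcn(U)$ that is pseudo \nde in $Z(U)$, with $\lnorm{\beta_m}=\lambda$ and support $F_m\subset U$ of empty interior. Setting $\nu_m:=\mu_0+\beta_m\chi_U$ produces the candidate second family.

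The routine verifications rest on the restriction property $\qqde|_U\subseteq Q(U)$: every integrable holomorphic quadratic differential on $\de$ restricts to one on $U$, so that any element of $\mcn(U)$, extended by $0$, lies in $\mcn(\de)$. In particular $\beta_m\chi_U\in\mcn(\de)$, whence $\nu_m\in[\mu_0]_Z$; and since $\mu_0,\beta_m$ are carried by the disjoint regions $\de_1,U$, the modulus of $\nu_m$ equals $|\mu_0|$ on $\de_1$ and $|\beta_m|$ on $U$, so $\lnorm{\nu_m}=\lambda$ and each $\nu_m$ is extremal in $[\mu_0]_Z$. Distinctness of the $\beta_m$ gives infinitely many distinct $\nu_m$. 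Each $\nu_m$ fails to be \nde: as $\beta_m$ is \dcs on $U$, a competitor $\gamma\sim\beta_m$ on $U$ with $|\gamma|\le|\beta_m|$ and $\gamma\neq\beta_m$ lifts, again by the restriction property, to $\mu_0+\gamma\chi_U\sim\nu_m$ on $\de$ with strictly smaller modulus on a set of positive measure. (In particular the $\nu_m$ are automatically distinct from the \nde family.)

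The main obstacle is to show that each $\nu_m$ is nevertheless \emph{weakly} \nde on $\de$, i.e. not strongly \dcs, so that it is a genuine pseudo \nde extremal. Suppose $\nu\sim\nu_m$ satisfied $|\nu|\le|\nu_m|$ a.e. and $|\nu|\le|\nu_m|-\delta$ on a domain $G\subset\de$. The case $G\subset U$ is impossible: since $F_m$ is closed with empty interior, $G\setminus F_m$ is a nonempty open, hence positive-measure, subset of $G$ on which $\nu_m=\beta_m=0$, contradicting $|\nu_m|\ge\delta$ a.e. on $G$. Hence $G$ meets $\de_1$, and passing to a component of $G\cap\de_1$ we may assume $G\subset\de_1$. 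This remaining case is the technical heart of the argument and is exactly where I expect the real work. The key lever is that, because $\beta_m\in\mcn(U)$ pairs to $0$ against every $\varphi\in\qqde$, a Hamilton sequence $\varphi_k\in\qde$ with $Re\iint_\de\nu_m\varphi_k\to\lambda$ is simultaneously a Hamilton sequence for $\mu_0$; combining this shared sequence with a Reich--Strebel type inequality as in \cite{Re6,Yao2,Yao3}, one forces any $\nu\sim\mu_0$ with $|\nu|\le|\nu_m|$ to concentrate its modulus asymptotically on $\de_1$ in a way incompatible with a uniform $\delta$-decrease on the subdomain $G\subset\de_1$, contradicting the (strong) non-decreasability of $\mu_0$ there. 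The same shared-sequence mechanism is what I would use to confirm that the \nde extremals inherited from \cite{Yao2} remain \nde on all of $\de$; the delicate bookkeeping of equivalence classes under $\qqde|_U\subseteq Q(U)$, rather than any single estimate, is where the bulk of the effort lies.
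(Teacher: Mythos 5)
Your construction is, in outline, the paper's own: the paper takes the frame $k\mathcal{A}$ given by the Construction Theorem outside two disjoint Jordan domains $J_1,J_2$, puts the radial step of Lemma \ref{Th:lemext2} inside $J_1$ and zero on $J_2$ (this composite is exactly your $\mu_0$, with $U=J_2$), and then glues the pseudo \nde differentials of Theorem \ref{Th:infweak0} onto $J_2$, exactly as your $\nu_m=\mu_0+\beta_m\chi_U$ does. Your ``routine'' verifications are correct: $\qqde|_U\subset Q(U)$ gives $\beta_m\chi_U\in\mcn(\de)$, hence $\nu_m\in[\mu_0]_Z$, extremality, failure of non-decreasability, and the impossibility of a strong decrease on a domain $G\subset U$. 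But the step you explicitly defer --- the case $G\subset\de_1$ --- is a genuine gap, and the mechanism you propose for it cannot work as stated. The competitor $\nu$ is not a decreasing competitor for $\mu_0$ at all: on $U$ you only know $|\nu|\le|\beta_m|$, while $\mu_0\equiv 0$ there, so no ``(strong) non-decreasability of $\mu_0$'' can be contradicted; moreover that property of $\mu_0$ is nowhere established, and none of the three properties you postulate for $\mu_0$ (extremal, infinitely many \nde extremals in its class, vanishing on $U$) implies it.

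What the case $G\subset\de_1$ actually requires are the two lemmas the paper builds for precisely this purpose, neither of which appears in your sketch. First, frame rigidity (Lemma \ref{Th:lemext}, a consequence of the Construction Theorem of \cite{Re6}): every extremal in the class --- in particular every $\nu\sim\nu_m$ with $|\nu|\le|\nu_m|$, since such a $\nu$ is automatically extremal --- equals $k\mathcal{A}$ a.e.\ on $\de\setminus(\ov{J_1}\cup\ov U)$. This is an exact pointwise rigidity statement, not the ``asymptotic concentration of modulus'' your shared-Hamilton-sequence heuristic aims at, and it is what rules out a $\delta$-decrease on any domain meeting the frame, forcing $G\subset J_1$ or $G\subset U$. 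Second, for the remaining possibility $G\subset J_1$, one needs the converse of your restriction property: if $\nu\sim\nu_m$ and $\nu=\nu_m$ a.e.\ off $\ov{J_1}\cup\ov U$, then $\nu|_{J_1}\in[\nu_m|_{J_1}]_{Z(J_1)}$. This is Lemma \ref{Th:deform2} (from \cite{Yao3}); it does not follow from $\qqde|_U\subset Q(U)$, which yields only the trivial direction (restriction-wise equivalence implies global equivalence). Even granted both facts, you would still need $\mu_0|_{J_1}$ to be weakly \nde in $Z(J_1)$, which you never address; the paper sidesteps this by placing on $J_1$ one of the \nde elements supplied by Lemma \ref{Th:lemext2} and factorizing both ``weakly \nde'' and ``\nde'' through Lemma \ref{Th:lemext1}. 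The same machinery is also what is missing from your treatment of the first family: the \nde extremals of \cite{Yao2} do not serve ``verbatim'', since adjoining $U$ changes the equivalence class; their non-decreasability in the new class must be re-proved, which the paper does via part (B) of Lemma \ref{Th:lemext1}, not inherited from the old one.
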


It seems that Theorem \ref{Th:pseudo0} is covered by Theorem \ref{Th:sc}. But it is not in such a case. On the one hand, $\boz$ contains infinitely many weakly \nde dilatations while $0$ is the unique \nde dilatation. On the other hand, after investigating the proof in \cite{SC}, we find that  Shen and Chen actually  proved Theorem \ref{Th:sc} in the following precise form.

\begin{theoa} Suppose  $\bmu\neq \boz$. Then for sufficiently large $\lambda>\norm{\bmu}$, $\bmu$ contains  infinitely many  \nde  dilatations $\nu$ with $\lnorm{\nu}\geq\lambda$.
\end{theoa}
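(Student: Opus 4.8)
The plan is to re-examine the construction underlying Theorem A (that of \cite{SC}) and to observe that the dilatations it produces can be forced to have arbitrarily large sup-norm, so that the surviving estimate is the lower bound $\lnorm{\nu}\geq\lambda$ rather than an upper one. The first point to fix is the right working criterion for non-decreasability. Since we aim at $\lnorm{\nu}\geq\lambda>\norm{\bmu}$, every such $\nu$ is automatically non-extremal, so the mechanism cannot be a Hamilton condition for extremality; instead $\nu$ is non-decreasable in $\bmu$ precisely when the only $\eta\in\mcn(\de)$ with $|\nu-\eta|\leq|\nu|$ a.e. is $\eta=0$, which pointwise reads
\[
|\eta(z)|^2\leq 2\,\mathrm{Re}\bigl(\overline{\nu(z)}\,\eta(z)\bigr)\quad\text{a.e. in }\de .
\]
In particular any admissible $\eta$ must vanish where $\nu=0$ and must point into the half-plane determined by $\nu$ where $\nu\neq0$; thus non-decreasability asserts that the \emph{direction} of $\nu$ is pinned on its whole support tightly enough that no locally trivial differential fits inside this pointwise constraint.

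Next I would set up the construction. Fix an extremal representative $\mu_0\in\bmu$ together with a Hamilton sequence $\{\varphi_n\}\subset\qde$ realizing $\mathrm{Re}\iint_\de\mu_0\varphi_n\,dxdy\to\norm{\bmu}>0$ (here we use $\bmu\neq\boz$ and the sup-norm formula for $\norm{\bmu}$). The dilatation is produced in the form $\nu=\mu_0+\eta_0$ with a single locally trivial differential $\eta_0\in\mcn(\de)$ carrying a free scale parameter $t$; the large sup-norm is injected through $\eta_0$, which is harmless for membership in $\bmu$ because $\mcn(\de)$ is closed under scaling and contains elements of every norm. Taking the scale $t\geq\lambda$ yields $\lnorm{\nu}\geq\lambda$ directly, and this is exactly where the lower bound of the statement originates; the qualifier ``sufficiently large $\lambda$'' is the budget needed to build the locking family below. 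The delicate design requirement is that $\eta_0$ must depress $|\nu|$ strictly below $|\mu_0|$ on a set of positive measure — otherwise $\mu_0$ itself would decrease $\nu$ — while simultaneously leaving the direction of $\nu$ aligned with a rich family of quadratic differentials distilled from $\{\varphi_n\}$.

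The heart of the argument, and the step I expect to be the main obstacle, is to verify non-decreasability of $\nu$: one must exclude \emph{every} admissible $\eta\in\mcn(\de)$, not merely the particular decrease back to $\mu_0$. I would do this by exhibiting a sequence $\{\psi_n\}\subset\qde$ adapted to $\nu$ along which the alignment defect
\[
\iint_\de\Bigl(|\nu(z)|-\mathrm{Re}\,\nu(z)\tfrac{\psi_n(z)}{|\psi_n(z)|}\Bigr)|\psi_n(z)|\,dxdy\longrightarrow 0
\]
vanishes, and whose moduli $|\psi_n|\,dxdy$ do not lose mass on any positive-measure subset of $\{\nu\neq0\}$. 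Note that this aligning condition pins the direction of $\nu$ without certifying extremality, which is consistent with $\nu$ being non-extremal. Feeding an admissible $\eta$ into the triviality relations $\iint_\de\eta\,\psi_n\,dxdy=0$, the alignment makes $\mathrm{Re}(\eta\psi_n)\approx|\psi_n|\,\mathrm{Re}(\overline{\nu}\,\eta)/|\nu|\geq0$ pointwise, so integrating to zero and using the non-degeneracy of $|\psi_n|$ should force $\mathrm{Re}(\overline{\nu}\,\eta)=0$, whence $|\eta|^2\leq0$ and $\eta=0$. The real difficulty is that inflating the norm by $\eta_0$ tends to open up room to decrease $\nu$, so the support, direction, and scale of $\eta_0$ must be matched to the aligning family so that the locking inequality admits only the trivial solution; this compatibility is precisely the quantitative content implicit in \cite{SC}, and extracting it is the crux.

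Finally, to obtain infinitely many such $\nu$ I would let the scale parameter $t$ range over the interval $[\lambda,\infty)$ (or perturb the set on which $\eta_0$ is supported). Distinct scales give dilatations with distinct sup-norms, hence genuinely distinct elements of $\bmu$, each non-decreasable and each satisfying $\lnorm{\nu}\geq\lambda$, which yields the asserted infinitude.
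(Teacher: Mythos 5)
First, a point of comparison: the paper offers no independent proof of this statement. Theorem A$'$ is presented there as what one finds upon inspecting the proof in \cite{SC} --- namely, that the \nde representatives Shen and Chen construct automatically have large sup-norm --- so the only legitimate routes are a faithful extraction of that construction or a self-contained new argument. Your proposal is neither. Its two sound ingredients --- the reformulation that $\nu$ is \nde in $\bmu$ if and only if the only $\eta\in\mcn(\de)$ with $|\eta|^2\le 2\,\mathrm{Re}(\ov{\nu}\,\eta)$ a.e. is $\eta=0$, and the general shape of a locking argument feeding such an $\eta$ into the relations $\iint_\de\eta\,\psi_n\,dxdy=0$ --- are only the frame of a proof. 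The actual content (which $\eta_0$ to add, which aligning sequence $\{\psi_n\}\subset\qde$ to use, and why the two are compatible) is never supplied; you yourself flag this compatibility as ``the crux'' to be extracted from \cite{SC}. A proposal whose central step is declared unresolved does not prove the statement.

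Second, the ansatz you do commit to --- $\nu=\mu_0+t\eta_0$ with one fixed trivial $\eta_0$, $\lnorm{\eta_0}=1$, scaled by $t\ge\lambda$ --- is not merely incomplete; it provably fails for the most natural choices of $\eta_0$. For $0<\eps<t$ one computes pointwise
\begin{equation*}
|\nu|^2-|\nu-\eps\eta_0|^2
=\eps\bigl[(2t-\eps)|\eta_0|^2+2\,\mathrm{Re}(\ov{\mu_0}\,\eta_0)\bigr]
\ge \eps\,|\eta_0|\bigl[(2t-\eps)|\eta_0|-2\lnorm{\mu_0}\bigr],
\end{equation*}
so if $|\eta_0|$ is bounded below on its support (as for any characteristic-function-type trivial differential), then for $t$ large the right-hand side is nonnegative a.e.; hence $\eps\eta_0\in\mcn(\de)\setminus\{0\}$ is an admissible decrease and $\nu$ is \dcs --- the opposite of what is claimed, and worse the larger $t$ is. Injecting the norm through $\eta_0$ thus works \emph{against} non-decreasability: any workable $\eta_0$ must have essential infimum $0$ on its support, with the direction of $\nu$ near the zero set of $\eta_0$ controlled by $\mu_0$, none of which your sketch addresses. (Two smaller inaccuracies: $t\ge\lambda$ only yields $\lnorm{\nu}\ge\lambda-\lnorm{\mu_0}$; and infinitude needs no ``distinct sup-norms'' argument, since distinct $t$ give distinct $\nu$ trivially --- the real burden is that each member of the ray needs its own non-decreasability proof.) Contrast this with the model locking argument that does close: for $\nu=\lambda\ov{\vp}/|\vp|$ with $\vp\in\qqde$, the admissibility inequality gives $\mathrm{Re}(\eta\vp)\ge |\eta|^2|\vp|/(2\lambda)\ge 0$, and integrating against $\iint_\de\eta\vp\,dxdy=0$ forces $\eta=0$ a.e. The direction field there is holomorphic; a trivial differential $\eta_0$, which by definition annihilates every $\vp\in\qqde$, supplies no such field, which is precisely why the alignment you postulate is so hard to arrange and why dominating $\nu$ by $t\eta_0$ points in the wrong direction.
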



\section{\!\!\!\!\!{. }
Some preparations }\label{S:prepar}

The first lemma comes Lemma 2.2 in \cite{Yao11}.
\begin{lemma}\label{Th:cor}Suppose that $\mu\in Bel(\de)$. Let $\alpha\in Bel(\de)$ Then for any $z_0\in \de$ and $\epsilon>0$,  there exists  $\nu\in \bmu$ and a small $r>0$ such that $\lnorm{\nu|_{\de\backslash\de(\zeta,r)}}\leq \lnorm{\mu}+\epsilon$ and
\begin{equation}\label{Eq:mug0} \nu(z)=\alpha(z),\quad \text{ when } z\in \de(\zeta,r)=\{z:\;|z-\zeta|<r\}.\end{equation}
 In particular, $\nu$ vanishes on $\de(\zeta,r)$ when $\alpha=0$.
 In particular, $\nu$ vanishes on $\de(\zeta,r)$ when $\alpha=0$.
\end{lemma}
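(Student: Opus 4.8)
The plan is to reduce the statement to a pure representation problem inside $\mcn(\de)$, the space of differentials infinitesimally equivalent to $0$, and then to solve that problem by a successive-approximation (Neumann-type) argument exploiting how small a tiny interior disk looks to an integrable quadratic differential. Reading the centre as $\zeta$, I first observe that it suffices to produce $\eta\in\mcn(\de)$ with
\[
\eta(z)=\alpha(z)-\mu(z)\ \text{ on } \de(\zeta,r),\qquad \lnorm{\eta|_{\de\setminus\de(\zeta,r)}}\le\epsilon .
\]
Indeed, $\nu:=\mu+\eta$ then lies in $\bmu$, equals $\alpha$ on $\de(\zeta,r)$, and satisfies $\lnorm{\nu|_{\de\setminus\de(\zeta,r)}}\le\lnorm{\mu}+\epsilon$ by the triangle inequality. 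Writing $\psi=(\alpha-\mu)\chi_{\de(\zeta,r)}$, the prescription forces $\eta=\psi+\sigma$ with $\sigma$ supported in $\de\setminus\de(\zeta,r)$, and then $\eta\in\mcn(\de)$ is the single linear condition $\iint_\de\sigma\vp=-L(\vp)$ for all $\vp\in Q(\de)$, where $L(\vp)=\iint_{\de(\zeta,r)}(\alpha-\mu)\vp\,dxdy$. Thus everything reduces to representing the fixed functional $-L$ by a bounded differential supported off the disk, of sup-norm at most $\epsilon$.

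The quantitative input is that a small interior disk is nearly invisible to $Q(\de)$. With $d=1-|\zeta|$ and $r\le d/2$, subharmonicity of $|\vp|$ yields $\sup_{\de(\zeta,r)}|\vp|\le\frac{4}{\pi d^2}\|\vp\|=:C(\zeta)\|\vp\|$ for every $\vp\in Q(\de)$, whence $\|L\|_{Q(\de)^*}\le M\pi r^2 C(\zeta)$ with $M=\lnorm{\alpha}+\lnorm{\mu}$; in particular $\kappa:=\pi r^2C(\zeta)$ can be made arbitrarily small by shrinking $r$. I would then build $\sigma$ iteratively. Since $Bel(\de)\to Q(\de)^*$ is isometrically onto (Hahn–Banach applied to $Q(\de)\subset L^1(\de)$, together with $(L^1)^*=L^\infty$), represent $-L$ by some $\tau_0$ with $\lnorm{\tau_0}=\|L\|$ and truncate: $\sigma_0=\tau_0\chi_{\de\setminus\de(\zeta,r)}$. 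Truncation leaves only the residual functional $\vp\mapsto\iint_{\de(\zeta,r)}\tau_0\vp$, whose norm is at most $\kappa\lnorm{\tau_0}$ by the same interior bound. Representing this residual by $\tau_1$, truncating to obtain $\sigma_1$, and iterating, I get $\sigma=\sum_{k\ge0}\sigma_k$ supported in $\de\setminus\de(\zeta,r)$ with $\lnorm{\sigma}\le\sum_{k\ge0}\kappa^k\|L\|=\|L\|/(1-\kappa)\le M\pi r^2 C(\zeta)/(1-\kappa)$, while a telescoping computation gives $\iint_\de\sigma\vp=-L(\vp)$ exactly for all $\vp$. Choosing $r$ so small that $\kappa<\tfrac12$ and $M\pi r^2 C(\zeta)/(1-\kappa)\le\epsilon$ finishes the construction, and $\eta=\psi+\sigma$ has the two required properties.

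The step I expect to be the main obstacle is precisely this last one: a single Hahn–Banach representative of $-L$ need not vanish on $\de(\zeta,r)$, and merely deleting its values there destroys membership in $\mcn(\de)$. The Neumann-series device is what upgrades the approximate cancellation—good by the factor $\kappa<1$ per step, thanks to the interior estimate—into exact cancellation while keeping the total sup-norm of order $r^2$. Once this is in place the remaining checks (that $\nu\in\bmu$, that $\nu=\alpha$ on $\de(\zeta,r)$, and the norm bound outside) are immediate, and the final clause is the special case $\alpha=0$, for which $\nu$ vanishes on $\de(\zeta,r)$.
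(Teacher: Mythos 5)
Your proposal is correct, and every step checks out: the reduction to finding $\eta\in\mcn(\de)$ with $\eta=\alpha-\mu$ on $\de(\zeta,r)$ and $\lnorm{\eta|_{\de\backslash\de(\zeta,r)}}\leq\epsilon$ is sound (then $\nu=\mu+\eta\in\bmu$ has all three required properties); the interior estimate $\sup_{\de(\zeta,r)}|\vp|\leq \frac{4}{\pi d^2}\|\vp\|$ for $\vp\in \qqde$, $r\leq d/2$, is the right quantitative input and makes $\|L\|=O(r^2)$; the representation of any bounded functional on $\qqde$ by some $\tau\in L^\infty(\de)$ of equal norm (Hahn--Banach into $L^1$, then $(L^1)^*=L^\infty$) is standard and valid for the complex-linear pairing used here; and the Neumann-series iteration legitimately upgrades the truncation error, which contracts by the factor $\kappa=\pi r^2 C(\zeta)<1$ at each step, into exact cancellation, the telescoping being justified because $\sum_k\sigma_k$ converges in $L^\infty$ while $\vp\in L^1$. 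Note, however, that there is no in-paper proof to compare against: the paper states this lemma with only the citation ``Lemma 2.2 in \cite{Yao11}'', so your argument serves as a self-contained substitute rather than a variant of a displayed proof. Two small remarks. First, your phrase that $Bel(\de)\to Q(\de)^*$ is ``isometrically onto'' is loose (the map is a quotient map, not an isometry); what you actually use and justify --- that each functional admits a representative of equal norm --- is the correct statement. Second, the same two ingredients give a one-step proof avoiding iteration: since restriction $\vp\mapsto \vp|_{\de\backslash \ov{\de(\zeta,r)}}$ is injective by the identity theorem, one may define the target functional on the subspace of restrictions inside $L^1(\de\backslash \de(\zeta,r))$, bound its norm by $\lnorm{\mu}+O(r^2)$ using the same interior estimate, and represent it directly by a $\sigma$ supported off the disk; your iterative version is equally rigorous and yields the same bounds.
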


The second lemma is actually Theorem 2 in \cite{Yao11}.
\begin{lemma}\label{Th:lempseudo}Suppose  $\bmu\in \zde$.  Let $\chi\in \bmu$ and  $\msu=\{\alpha\in \bmu:\; |\alpha(z)|\leq |\chi(z)| \text{ a.e. on } \de\}$. Then
then there is a weakly \nde dilatation $\nu$ in $\msu$.
\end{lemma}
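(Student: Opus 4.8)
The plan is to realize the desired weakly \nde dilatation as the minimizer of the $L^1$-mass functional over $\msu$. Since $\chi\in\msu$, the set is nonempty, and the quantity
\[
m=\inf_{\alpha\in\msu}\iint_\de|\alpha(z)|\,dxdy
\]
is finite (it is bounded above by $\iint_\de|\chi|\,dxdy<\infty$ on the bounded domain $\de$) and nonnegative. Everything reduces to two assertions: that $m$ is attained inside $\msu$, and that any such minimizer cannot be strongly decreasable.

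For attainment I would take a minimizing sequence $\alpha_n\in\msu$. As $|\alpha_n|\le|\chi|\le\lnorm{\chi}$ a.e., the $\alpha_n$ form a bounded set in $L^\infty(\de)=(L^1(\de))^*$; since $L^1(\de)$ is separable, a subsequence converges weak-$*$ to some $\nu\in L^\infty(\de)$. Testing against the fixed $\vp\in\qqde\subset L^1(\de)$ and letting $n\to\infty$ in $\iint_\de(\alpha_n-\mu)\vp\,dxdy=0$ shows $\nu\in\bmu$. The two steps that require care are that the pointwise constraint $|\nu|\le|\chi|$ survives the weak-$*$ limit and that the mass does not jump up in the limit; neither is a pointwise statement, and both rest on convexity. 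For the first, I would write the constraint set as
\[
K=\Bigl\{f\in L^\infty(\de):\ \mathrm{Re}\iint_\de f\psi\,dxdy\le\iint_\de|\chi|\,|\psi|\,dxdy\ \text{ for all }\psi\in L^1(\de)\Bigr\},
\]
which is an intersection of weak-$*$ closed half-spaces, hence weak-$*$ closed, and which coincides with $\{f:|f|\le|\chi|\ \text{a.e.}\}$ (test with $\psi=\overline{f/|f|}\,\mathbf 1_E$ over measurable sets $E$). Thus $\nu\in K$, and with $\nu\in\bmu$ this gives $\nu\in\msu$. For the second, the representation $\iint_\de|f|\,dxdy=\sup\{\mathrm{Re}\iint_\de f\bar\psi\,dxdy:\ \psi\in C_c(\de),\ \lnorm{\psi}\le1\}$ exhibits the mass as a supremum of weak-$*$ continuous functionals, hence weak-$*$ lower semicontinuous, so $\iint_\de|\nu|\,dxdy\le\liminf_n\iint_\de|\alpha_n|\,dxdy=m$ and $\nu$ is a minimizer.

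It then remains to check that the minimizer $\nu$ is weakly \nde, which I would do by contradiction. If $\nu$ were strongly decreasable, there would exist $\nu'\in\bmu$ with $|\nu'|\le|\nu|$ a.e.\ and $|\nu'|\le|\nu|-\delta$ on some subdomain $G$ with $\delta>0$. Then $|\nu'|\le|\nu|\le|\chi|$ forces $\nu'\in\msu$, while
\[
\iint_\de|\nu'|\,dxdy\le\iint_\de|\nu|\,dxdy-\delta\cdot\mathrm{area}(G)=m-\delta\cdot\mathrm{area}(G)<m,
\]
contradicting the definition of $m$. Hence no such $\nu'$ exists, $\nu$ is not strongly decreasable, and therefore it is a weakly \nde dilatation in $\msu$. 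The genuinely delicate point of the whole argument is the stability of the pointwise constraint and of the mass under weak-$*$ convergence; I expect this convexity bookkeeping, rather than the production of the minimizer or the final contradiction, to be the main obstacle.
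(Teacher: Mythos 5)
Your proof is correct, but it is not the paper's argument: the paper gives no proof of this lemma at all, citing it as Theorem 2 of \cite{Yao11}, and the cited proof is of a different, constructive character --- as one can see from the fact that Section 4 later invokes not merely the lemma but ``the proof of Lemma \ref{Th:lempseudo}'' in order to obtain, from a strongly \dcs $\beta$, a weakly \nde dilatation $\beta'$ with $|\beta'|\le|\beta|$ a.e.\ and $|\beta'|\le|\beta|-\delta$ on a small round disk. Your route is variational and self-contained: minimize the $L^1$-mass over $\msu$; existence of a minimizer follows from weak-$*$ sequential compactness of norm-bounded sets in $L^\infty(\de)=(L^1(\de))^*$, weak-$*$ closedness of the class constraint (tested against $\qqde\subset L^1(\de)$) and of the pointwise constraint $\{f:\,|f|\le|\chi|\ \text{a.e.}\}$ (your half-space description of it is valid), and weak-$*$ lower semicontinuity of the mass; and a strong decrease on a domain $G$ would cut the mass by at least $\delta\cdot\mathrm{area}(G)>0$, contradicting minimality. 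All of these steps check. In fact your minimizer enjoys a property stronger than weak non-decreasability: no $\eta\in\msu$ with $|\eta|\le|\nu|$ a.e.\ can satisfy $|\eta|<|\nu|$ even on a set of positive measure, since uniformity on an open set is irrelevant to the mass count. The trade-off is that the paper's citation carries the proof-level byproduct quoted above, which the proof of Theorem \ref{Th:pseudo2} uses; if your argument were to replace the citation, that later step would need one extra observation, namely that running your minimization over $\{\alpha\in\bmu:\ |\alpha|\le|\eta|\ \text{a.e.}\}$, where $\eta$ witnesses the strong decreasability of $\beta$, produces a weakly \nde dilatation $\beta'$ that inherits both $|\beta'|\le|\beta|$ and the $\delta$-gap on a disk.
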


\begin{lemma}\label{Th:deform2}Let  $J_i\subset \de $ ($i=1,2,\ldots,m$) be
$m$ Jordan domains such that $\ov{J_i}\subset \de$, $\ov{J_i}$ ($i=1,2,\ldots,m$) are
mutually disjoint and $\de\backslash  \bigcup^m_1 \ov{J_i}$ is
connected. Suppose $\mu,\nu\in Bel(\de)$ satisfying $\mu(z)=\nu(z)$ a.e. on $\de\backslash \bigcup_{i=1}^m \ov{J_i}$. Then the following two conditions is equivalent:
\\
(a) $\bmu=\bnu$,\\
(b) $[\mu|_{J_i}]_Z=[\nu|_{J_i}]_Z$, where we regard $[\mu|_{J_i}]_Z, [\nu|_{J_i}]_Z$ as the points in the infinitesimal \T space $Z(J_i)$, ($i=1,2,\cdots,m$).
\begin{proof}See the proof of Lemma 3 of \cite{Yao3}.

\end{proof}

\end{lemma}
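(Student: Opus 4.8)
The plan is to phrase both conditions as the vanishing of pairings against quadratic differentials and to work with the single Beltrami differential $\eta:=\mu-\nu$, which by hypothesis vanishes a.e.\ on $\de\backslash K$, where $K:=\bigcup_{i=1}^m\ov{J_i}$. By the definition of infinitesimal equivalence, (a) says exactly that $\iint_\de\eta\,\vp\,dxdy=0$ for every $\vp\in Q(\de)$, and (b) says exactly that $\iint_{J_i}\eta\,\psi\,dxdy=0$ for every $\psi\in Q(J_i)$ and every $i$. Because the $\ov{J_i}$ are disjoint and $\eta$ vanishes off their union, the global pairing splits,
\begin{equation*}
\iint_\de\eta\,\vp\,dxdy=\sum_{i=1}^m\iint_{J_i}\eta\,\vp\,dxdy,
\end{equation*}
so the content of the lemma is to decouple this sum of local pairings into the individual local pairings. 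For (b)$\Rightarrow$(a) I note that every $\vp\in Q(\de)$ restricts to a member of $Q(J_i)$: it is holomorphic on $J_i$, and since $J_i$ is bounded, $\iint_{J_i}|\vp|\le\iint_\de|\vp|<\infty$. Hence by (b) each summand above vanishes, and the splitting gives (a).

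The implication (a)$\Rightarrow$(b) is the heart of the matter. Introduce the linear functional $L$ on the $L^1$ direct sum $\bigoplus_{i=1}^m Q(J_i)$ defined by $L(\psi_1,\dots,\psi_m)=\sum_{i=1}^m\iint_{J_i}\eta\,\psi_i\,dxdy$; it is bounded because $|L(\psi_1,\dots,\psi_m)|\le\lnorm{\eta}\sum_i\|\psi_i\|_{L^1(J_i)}$. In this language (a) asserts precisely that $L$ annihilates the image of the restriction map $R:Q(\de)\to\bigoplus_i Q(J_i)$, $R\vp=(\vp|_{J_1},\dots,\vp|_{J_m})$, whereas (b) asserts that $L\equiv 0$. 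Since $L$ is continuous, it therefore suffices to prove that $R$ has dense image: then $L$ vanishes on $\ov{\operatorname{im}R}=\bigoplus_i Q(J_i)$, and evaluating $L$ on a tuple $(0,\dots,0,\psi,0,\dots,0)$ gives (b).

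It remains to show that $R$ has dense image, and since $\operatorname{im}R$ is a linear subspace it is enough to approximate each tuple $(0,\dots,0,\psi,0,\dots,0)$ with $\psi\in Q(J_{i_0})$. I would do this in two stages. First, $J_{i_0}$ is a Jordan domain, hence a Carath\'eodory domain, so by the classical Farrell--Markushevich theorem polynomials are dense in its Bergman space $Q(J_{i_0})$; choose a polynomial $g$ with $\|g-\psi\|_{L^1(J_{i_0})}<\eps$. Second, let $h$ be the function equal to $g$ on a small neighborhood of $\ov{J_{i_0}}$ and equal to $0$ on small, pairwise disjoint neighborhoods of the remaining $\ov{J_i}$; as the $\ov{J_i}$ are mutually disjoint compacta, $h$ is well defined and holomorphic on a neighborhood of $K$. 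The crucial topological point is that $\mc\backslash K$ is connected: since each $\ov{J_i}$ is relatively compact in $\de$, the set $\de\backslash K$ reaches the boundary circle $\pa\de$, and gluing the connected pieces $\de\backslash K$, $\pa\de$, and $\mc\backslash\ov\de$ along $\pa\de$ shows $\mc\backslash K=(\de\backslash K)\cup\pa\de\cup(\mc\backslash\ov\de)$ is connected; this is exactly where the hypothesis that $\de\backslash\bigcup_i\ov{J_i}$ is connected enters. Runge's theorem then provides polynomials $\vp_n$ with $\sup_K|\vp_n-h|\to 0$, whence $\vp_n|_{J_{i_0}}\to g$ and $\vp_n|_{J_i}\to 0$ ($i\neq i_0$) in $L^1$. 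Each $\vp_n$, being a polynomial, lies in $Q(\de)$, so $R\vp_n$ lies within $O(\eps)$ of $(0,\dots,0,\psi,0,\dots,0)$, and letting $\eps\to 0$ yields the required density.

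I expect the main obstacle to be precisely this approximation step: marrying the Farrell--Markushevich density inside the single domain $J_{i_0}$ to the Runge approximation on the compact $K$, and in particular verifying that the stated connectedness hypothesis delivers the condition ``$\mc\backslash K$ connected'' that Runge requires for polynomial approximation. This is the one place where the geometric assumption on the $J_i$ is used, and it is what welds the otherwise independent local data on the separate $J_i$ into a single statement on $\de$. (A minor routine point to handle along the way is that the Jordan boundaries $\pa J_i$ may be treated as null, so that the integral over $\de$ genuinely decomposes as the sum over the open cells $J_i$.)
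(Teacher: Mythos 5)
Your overall scheme is sound and is the standard mechanism behind the result the paper merely cites (the paper gives no inline argument, only a pointer to Lemma 3 of \cite{Yao3}): pair $\eta=\mu-\nu$ against quadratic differentials, reduce (a)$\Rightarrow$(b) to density of the image of the restriction map $R:Q(\de)\to\bigoplus_iQ(J_i)$, and obtain that density by Farrell--Markushevich inside $J_{i_0}$ followed by Runge on $K=\bigcup_i\ov{J_i}$, with the hypothesis that $\de\backslash\bigcup_i\ov{J_i}$ is connected correctly converted into the condition ``$\mc\backslash K$ connected'' that Runge needs. The (b)$\Rightarrow$(a) direction and the functional-analytic bookkeeping are also fine.

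However, the point you set aside as ``minor and routine'' --- that the boundaries $\pa J_i$ may be treated as Lebesgue null --- is neither minor nor routine: a Jordan curve can have positive area (Osgood), and in that generality both your splitting $\iint_\de\eta\vp\,dxdy=\sum_{i}\iint_{J_i}\eta\vp\,dxdy$ and the lemma itself are false. Indeed, take $\nu=0$ and $\mu=\chi_{\pa J_1}$, the characteristic function of a boundary $\pa J_1$ of positive area: the hypothesis ``$\mu=\nu$ a.e.\ on $\de\backslash\bigcup_i\ov{J_i}$'' holds, and (b) holds trivially since $\mu|_{J_i}=0=\nu|_{J_i}$, yet pairing with the constant $\vp\equiv1\in Q(\de)$ gives $\iint_\de(\mu-\nu)\vp\,dxdy=\operatorname{meas}(\pa J_1)>0$, so (a) fails. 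Thus ``$\operatorname{meas}(\pa J_i)=0$'' is a genuine implicit hypothesis of the lemma (automatic for the round disks to which the lemma is actually applied in this paper, and for rectifiable Jordan curves generally), not something that can be ``handled along the way.'' With that hypothesis made explicit, your proof is complete and correct; without it, no proof exists because the statement is false.
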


For $\mu\in Bel(\de)$, $\vp\in\qqde$, let
 \begin{equation*}
 \lambda_\mu[\vp]=Re\iint_\de \mu(z)\vp(z)dxdy.
 \end{equation*}

The following   Construction Theorem is essentially due to Reich \cite{Re6} and  is very useful for the study of (unique) extremality of \qc mappings (see \cite{Re6,Yao,Yao2,Yao3}).

\begin{theor}\label{Th:Re1}
Let $A$ be a compact subset of $\de$ consisting  of $m$
($m\in\mathbb{N}$) connected components and such that $\de\backslash
A$ is connected and  each connected component of $A$ contains at
least two points. There exists a function $\mathcal{A}\in\L^\infty(\de)$
and a sequence $\vp_n\in \qqde$ $(n=1,2,\ldots)$ satisfying the
following conditions $(\ref{Eq:q1})-(\ref{Eq:q4})$:
\begin{equation}\label{Eq:q1}
|\mathcal{A}(z)|=\begin{cases} 0, \quad &z\in A, \\
1,\quad  & \;for\; a.a. \;z\in \de\backslash A,\end{cases}
\end{equation}
\begin{equation}\label{Eq:q2}\limn\{\|\vp_n\|-\lambda_\mathcal{A}[\vp_n]\}=0,
\end{equation}
\begin{equation}\label{Eq:q3}\limn|\vp_n(z)|=\infty  \quad a.e. \; in\;
\de\backslash A.
\end{equation} and as $n\to\infty$,
\begin{equation}\label{Eq:q4}\vp_n(z) \to0  \text{  uniformly on }  A.
\end{equation}

\end{theor}
\begin{proof} See the proof of
 of Construction Theorem  in \cite{Yao3}.
\end{proof}

From the Construction Theorem, we can get
\begin{lemma}\label{Th:lemext}
Let $A$  be as in Construction Theorem and
$\mathcal{A}(z)$ be constructed by
 Construction Theorem.
Let
\begin{equation*}
\nu(z)=\begin{cases}k\mathcal{A}(z),\quad &z\in \de\backslash  A,\\
\mathcal{B}(z), \quad & z\in A,
\end{cases}
\end{equation*}
where $k<1$ is a positive
constant and $\mathcal{B}(z)\in L^\infty(A)$ with $\lnorm{\mathcal{B}}\leq k$.
Then
$\nu(z)$ is   extremal in $[\nu]$ and for any $\chi(z)$ extremal in
$\bnu$, $\chi(z)=\nu(z)$ for almost all $z$ in
$\de\backslash A$.
\end{lemma}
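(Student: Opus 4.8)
The plan is to exploit the sequence $\vp_n\in\qde$ supplied by the Construction Theorem, whose properties (q1)--(q4) are designed precisely to read off both the modulus and the phase of an extremal differential on $\de\backslash A$. For extremality, note first that $|\mathcal{A}|=1$ a.e. on $\de\backslash A$ together with $\lnorm{\mathcal{B}}\le k$ gives $\nnu=k$, while $\|\bnu\|\le\nnu$ holds automatically from the definition of the class norm as an infimum. To obtain the reverse inequality I test $\nu$ against $\vp_n$: splitting $Re\iint_\de\nu\vp_n$ over $A$ and over $\de\backslash A$, the integral over $A$ tends to $0$ because $\vp_n\to0$ uniformly on the compact set $A$ by (q4) while $|\nu|\le k$, whereas the integral over $\de\backslash A$ equals $k\,\lambda_{\mathcal{A}}[\vp_n]$ since $\mathcal{A}$ vanishes on $A$ by (q1), and this converges to $k\|\vp_n\|=k$ by (q2). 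Hence $\|\bnu\|\ge\limn Re\iint_\de\nu\vp_n=k=\nnu$, so $\nu$ is extremal.

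For the rigidity on $\de\backslash A$, let $\chi$ be any extremal in $\bnu$, so that $\lnorm{\chi}=k$ and $\iint_\de(\chi-\nu)\vp=0$ for every $\vp\in\qqde$. Then $Re\iint_\de\chi\vp_n\to k$, and since the contribution of $A$ is again negligible, $Re\iint_{\de\backslash A}\chi\vp_n\to k$. Pointwise on $\de\backslash A$ one has $Re(\chi\vp_n)\le|\chi||\vp_n|\le k|\vp_n|$, and $k\iint_{\de\backslash A}|\vp_n|\to k$; squeezing, the nonnegative integrand satisfies $\iint_{\de\backslash A}\bigl(k|\vp_n|-Re(\chi\vp_n)\bigr)\to0$. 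Writing this integrand as the sum of the two nonnegative terms $(k-|\chi|)|\vp_n|$ and $|\chi||\vp_n|\bigl(1-\cos(\arg\chi+\arg\vp_n)\bigr)$ and passing to a subsequence along which it tends to $0$ a.e., property (q3) ($|\vp_n|\to\infty$ a.e.) forces first $|\chi|=k$ and then $\cos(\arg\chi+\arg\vp_n)\to1$ a.e. on $\de\backslash A$. Running the identical argument on (q2), which states that $\iint_\de\bigl(|\vp_n|-Re(\mathcal{A}\vp_n)\bigr)\to0$ with a nonnegative integrand, yields $\cos(\arg\mathcal{A}+\arg\vp_n)\to1$, i.e. $\arg\vp_n\to-\arg\mathcal{A}$ a.e. Comparing the two phase limits gives $\arg\chi=\arg\mathcal{A}$, and together with $|\chi|=k=k|\mathcal{A}|$ this yields $\chi=k\mathcal{A}=\nu$ a.e. on $\de\backslash A$.

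The estimates over $A$ and the inequality $Re(w)\le|w|$ are routine; the one delicate point is the passage from $L^1$-convergence of these nonnegative integrands to pointwise statements, which I would handle by extracting a single subsequence $\vp_{n_j}$ along which both $k|\vp_{n_j}|-Re(\chi\vp_{n_j})$ and $|\vp_{n_j}|-Re(\mathcal{A}\vp_{n_j})$ converge to $0$ a.e. on $\de\backslash A$. Since the resulting conclusion $\chi=k\mathcal{A}$ a.e. is independent of $n$, restricting to this subsequence costs nothing. The main obstacle is thus to carry out the modulus-and-phase separation exactly on the full-measure set where $|\vp_n|\to\infty$, which is where (q2) and (q3) must be used in tandem.
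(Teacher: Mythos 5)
Your strategy --- testing against the sequence $\vp_n$, squeezing nonnegative integrands to $0$ in $L^1$, extracting a single a.e.\ convergent subsequence, and separating modulus from phase via $(\ref{Eq:q3})$ --- is exactly the Reich-type argument that the paper invokes by citing the proof of Lemma 5 of \cite{Yao3}, and the logical skeleton of your rigidity argument is sound. But there is a concrete error running through the write-up: you take $\vp_n\in\qde$, i.e.\ $\|\vp_n\|=1$. The Construction Theorem supplies only $\vp_n\in\qqde$, and normalization is not merely unstated but impossible: by $(\ref{Eq:q3})$ and Fatou's lemma, $\|\vp_n\|\ge\iint_{\de\backslash A}|\vp_n|\to\infty$. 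Consequently the asserted limits ``$k\lambda_{\mathcal{A}}[\vp_n]\to k\|\vp_n\|=k$'', ``$Re\iint_\de\chi\vp_n\to k$'' and ``$k\iint_{\de\backslash A}|\vp_n|\to k$'' are all false (each quantity diverges), and the extremality step ``$\|\bnu\|\ge\limn Re\iint_\de\nu\vp_n$'' is illegitimate as written, since the supremum defining $\|\bnu\|$ runs over the unit sphere $\qde$ only; taken literally it would give $\|\bnu\|=\infty$.

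The repair is routine, which is why this is an error of bookkeeping rather than of approach. For extremality, test with $\vp_n/\|\vp_n\|\in\qde$: by $(\ref{Eq:q1})$, $(\ref{Eq:q2})$ and $(\ref{Eq:q4})$ one has $Re\iint_\de\nu\vp_n=k\|\vp_n\|+o(1)$, hence $\|\bnu\|\ge k+o(1)/\|\vp_n\|\to k=\nnu$. For rigidity, keep everything unnormalized and record each quantity as $k\|\vp_n\|+o(1)$: since $\chi\in\bnu$ and using $(\ref{Eq:q4})$, both $Re\iint_{\de\backslash A}\chi\vp_n$ and $k\iint_{\de\backslash A}|\vp_n|$ equal $k\|\vp_n\|+o(1)$, so the divergent terms cancel and $\iint_{\de\backslash A}\bigl(k|\vp_n|-Re(\chi\vp_n)\bigr)\to0$. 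This unnormalized statement is the one your pointwise argument genuinely needs, because $(\ref{Eq:q3})$ concerns $|\vp_n|$ and not $|\vp_n|/\|\vp_n\|$ (the latter may perfectly well tend to $0$ a.e.). With that correction, your subsequence extraction, the forcing of $|\chi|=k$, the two phase limits $\cos(\arg\chi+\arg\vp_n)\to1$ and $\cos(\arg\mathcal{A}+\arg\vp_n)\to1$, and the conclusion $\chi=k\mathcal{A}=\nu$ a.e.\ on $\de\backslash A$ all go through as you wrote them.
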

\begin{proof} See the proof of
 of Lemma 5 in \cite{Yao3}.
\end{proof}

\begin{lemma}\label{Th:lemext1}Let  $J_i\subset \de $ ($i=1,2,\ldots,m$) be
$m$ Jordan domains such that $\ov{J_i}\subset \de$, $\ov{J_i}$ ($i=1,2,\ldots,m$) are
mutually disjoint and $\de\backslash  \bigcup^m_1 \ov{J_i}$ is
connected. Put $A=\bigcup^m_1 \ov{J_i}$. Let $\mathcal{A}(z)$   be constructed by the Construction
Theorem.
Let
\begin{equation*}
\nu(z)=\begin{cases}k\mathcal{A}(z),\quad &z\in \de\backslash  A,\\
\mathcal{B}(z), \quad & z\in A,
\end{cases}
\end{equation*}
where $k<1$ is a positive
constant and $\mathcal{B}(z)\in L^\infty(A)$ with $\lnorm{\mathcal{B}}\leq k$.
  We regard $[\nu|_{J_i}]_Z$ as  a point in the \T space $T(J_i)$, $i=1,2,\ldots,m$. Then,\\
(A) $\nu$ is a weakly \nde  dilatation in $\bnu$ if and only if every $\nu|_{J_i}$ is a weakly \nde dilatation in $[\nu|_{J_i}]_Z$, $i=1,2,\ldots,m$;\\
(B)  $\nu$ is a \nde  dilatation in $\bnu$ if and only if every $\nu|_{J_i}$ is  a \nde dilatation in $[\nu|_{J_i}]_Z$, $i=1,2,\ldots,m$.
\end{lemma}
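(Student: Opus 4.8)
The plan is to prove the two biconditionals (A) and (B) in parallel, exploiting two structural features of the special dilatation $\nu$. First, since $|\mathcal{A}|=1$ a.e.\ on $\de\backslash A$ and $\lnorm{\mathcal{B}}\le k$, we have $\lnorm{\nu}=k$, and Lemma \ref{Th:lemext} already gives that $\nu$ is extremal in $\bnu$; hence $\norm{\bnu}=k$. Consequently, any competitor $\sigma\in\bnu$ with $|\sigma(z)|\le|\nu(z)|$ a.e.\ automatically satisfies $\lnorm{\sigma}\le k=\norm{\bnu}$ and is therefore itself extremal. The rigidity clause of Lemma \ref{Th:lemext} then forces
\begin{equation*}
\sigma(z)=\nu(z)\quad\text{a.e. on }\de\backslash A .
\end{equation*}
This is the key mechanism: on $\de\backslash A$ the competitor cannot move at all. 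The second feature is Lemma \ref{Th:deform2}, which lets me pass between equivalence in $\de$ and equivalence of the restrictions in the pieces $Z(J_i)$, precisely because such a $\sigma$ agrees with $\nu$ on $\de\backslash\bigcup_1^m\ov{J_i}=\de\backslash A$.

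For the ``local $\Rightarrow$ global'' halves of both (A) and (B) I would argue by a gluing construction. Suppose some $\nu|_{J_{i_0}}$ is decreasable (resp.\ strongly decreasable) in $[\nu|_{J_{i_0}}]_Z$, witnessed by $\tau\in[\nu|_{J_{i_0}}]_Z$ with $|\tau|\le|\nu|_{J_{i_0}}|$ a.e.\ (resp.\ with an extra domain $G\subset J_{i_0}$ and $\delta>0$ on which $|\tau|\le|\nu|_{J_{i_0}}|-\delta$). Define $\sigma:=\tau$ on $J_{i_0}$ and $\sigma:=\nu$ on $\de\backslash J_{i_0}$. Then $|\sigma|\le|\nu|$ a.e.\ on $\de$, the restrictions satisfy $[\sigma|_{J_i}]_Z=[\nu|_{J_i}]_Z$ for every $i$, so Lemma \ref{Th:deform2} gives $\sigma\in\bnu$; moreover $\sigma\ne\nu$ (resp.\ $\sigma$ realizes the $\delta$-drop on $G\subset\de$). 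This exhibits $\nu$ as decreasable (resp.\ strongly decreasable), i.e.\ the contrapositive of the implication ``$\Rightarrow$'' in (B) (resp.\ in (A)).

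For the ``global $\Rightarrow$ local'' halves I would start from a global competitor $\sigma\in\bnu$ with $|\sigma|\le|\nu|$ a.e. By the rigidity above $\sigma=\nu$ a.e.\ on $\de\backslash A$, and Lemma \ref{Th:deform2} yields $[\sigma|_{J_i}]_Z=[\nu|_{J_i}]_Z$ for each $i$. In case (B), if $\sigma\ne\nu$ then, since the difference cannot occur on $\de\backslash A$, it occupies a positive-measure subset of some $J_{i_0}$; thus $\sigma|_{J_{i_0}}$ witnesses the decreasability of $\nu|_{J_{i_0}}$, and contraposing gives ``$\Leftarrow$'' in (B). The real work is case (A): assuming $\nu$ strongly decreasable, with drop domain $G$ and constant $\delta>0$, I must confine $G$ to a single Jordan domain. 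Here I use that $|\sigma|=|\nu|=k$ a.e.\ on $\de\backslash A$, so $|\sigma|\le|\nu|-\delta$ cannot hold on any positive-measure subset of the open set $\de\backslash A$; as $G\cap(\de\backslash A)$ is open and null, it is empty, whence $G\subset A=\bigcup_1^m\ov{J_i}$.

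The main obstacle is this localization step, and it is topological rather than analytic. Because the $\ov{J_i}$ are mutually disjoint compacta, each $G\cap\ov{J_i}$ is relatively clopen in the connected set $G$, so exactly one is all of $G$; thus $G\subset\ov{J_{i_0}}$ for a single $i_0$. As $G$ is open and $J_{i_0}$ is a Jordan domain with $\mathrm{int}\,\ov{J_{i_0}}=J_{i_0}$, we conclude $G\subset J_{i_0}$. Restricting the already-established relations to $J_{i_0}$ then shows $\sigma|_{J_{i_0}}\in[\nu|_{J_{i_0}}]_Z$, $|\sigma|_{J_{i_0}}|\le|\nu|_{J_{i_0}}|$ a.e., and $|\sigma|_{J_{i_0}}|\le|\nu|_{J_{i_0}}|-\delta$ a.e.\ on the domain $G\subset J_{i_0}$, i.e.\ $\nu|_{J_{i_0}}$ is strongly decreasable---the contrapositive of the implication ``$\Leftarrow$'' in (A). I expect the only points needing care to be the positive-measure bookkeeping when transporting drops between $\de$ and the pieces, and the elementary fact that an open null set in the plane is empty; everything else is a direct combination of Lemmas \ref{Th:lemext} and \ref{Th:deform2}.
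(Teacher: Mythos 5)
Your proposal is correct and follows essentially the same route as the paper's own proof: both arguments rest on the extremality-plus-rigidity clause of Lemma \ref{Th:lemext} (forcing any competitor with $|\sigma|\leq|\nu|$ to agree with $\nu$ a.e.\ on $\de\backslash A$) combined with Lemma \ref{Th:deform2} to transfer equivalence to the restrictions in $Z(J_i)$. The only difference is one of detail: you write out explicitly the gluing construction for the ``only if'' direction and the topological localization of the drop domain $G$ into a single $J_{i_0}$, both of which the paper dispatches as obvious or with the phrase ``it forces.''
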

\begin{proof} It is evident that $\nu$ is extremal in $\bnu$ by Lemma \ref{Th:lemext}.

(A) The ``only if" part is obvious. Now, assume that every $\nu|_{J_i}$ is a  weakly \nde  dilatation in $[\nu|_{J_i}]_Z$, $i=1,2,\ldots,m$. We show that $\nu$ is a weakly \nde  dilatation in $\bnu$. Suppose to the contrary. Then $\bnu$ is a strongly decreasable dilatation in $\bnu$. That is,
there exists a Beltrami differential $\eta\in\bnu$  such that \\
(1) $|\eta(z)|\leq |\nu(z)|$ for almost all $z\in \de$,\\
(2) there exists a domain $G\subset \de$  and a positive number $\delta>0$ such that
\begin{equation*}
|\eta(z)|\leq |\nu(z)|-\delta, \text{ for almost all  } z\in G.
\end{equation*}
Observe that $\eta$ is  extremal in $\bnu$ and hence $\eta(z)=\nu(z)$ a.e on $\de\backslash A$ by Lemma \ref{Th:lemext}.
It forces that  $G$ is contained in some $J_i$. Furthermore, by Lemma \ref{Th:deform2}, we have $\eta|_{J_i}\in [\nu|_{J_i}]_Z$. Thus $\nu|_{J_i}$ is a strongly decreasable dilatation in $[\nu|_{J_i}]_Z$, a contradiction.

(B) The ``only if" part is also obvious. Assume that every $\nu|_{J_i}$ is a    \nde  dilatation in $[\nu|_{J_i}]_Z$, $i=1,2,\ldots,m$. We show that $\nu$ is a   \nde  dilatation in $\bnu$. Suppose to the contrary. Then $\bnu$ is a  decreasable dilatation in $\bmu$. There exists a Beltrami differential $\eta\in\bnu$  such that  $|\eta(z)|\leq |\nu(z)|$ for almost all $z\in \de$ but $\eta(z)\neq \nu(z)$ on a subset $E\subset\de$ with positive measure. It is no harm to assume that $E\cap J_1$ has positive measure.   Since $\eta(z)=\nu(z)$ a.e on $\de\backslash A$, it follows from Lemma \ref{Th:deform2} that   $\eta|_{J_1}\in [\nu|_{J_1}]_Z$.  Thus, $\nu|_{J_1}$ is decreasable in $[\nu|_{J_1}]_Z$, a contradiction.

\end{proof}

 The following lemma comes from Lemma 7 in \cite{Yao2}.

\begin{lemma}\label{Th:lemext2}Set  $\de_s=\{z:\,|z|<s\}$ for $s\in (0,1)$.
Let $\chi(z)$ be defined as follows,
\begin{equation}\label{Eq:des}
\chi(z)=\begin{cases}0, \, & z\in \de-\de_s,\\
\wt k\,&z\in \de_s,
\end{cases}
\end{equation}
where $\wt k<1$ is a positive constant. Then $[\chi]_Z$  contains infinitely many
\nde Beltrami differentials $\eta$ with $\lnorm{\eta}<\wt k$.
\end{lemma}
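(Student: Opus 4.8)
The plan is to realize the required differentials as radially symmetric, strictly positive representatives of $[\chi]_Z$ and then to exploit positivity to force non-decreasability, so that the only thing left to arrange is a one-parameter family of admissible radial profiles.

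First I would restrict attention to Beltrami differentials of the form $\eta(z)=h(|z|)\,d\bar z/dz$, where $h\colon[0,1)\to(0,\wt k)$ is measurable with $\lnorm{\eta}=\esssup h<\wt k$. Recall that $\chi$ is itself radial, with $\chi=\wt k$ for $r<s$ and $\chi=0$ for $r\ge s$. For such a radial $\eta$ and any $\vp\in\qqde$, the mean value property gives $\int_0^{2\pi}\vp(re^{i\theta})\,d\theta=2\pi\vp(0)$, hence
\[
\iint_\de(\eta-\chi)\vp\,dxdy=2\pi\vp(0)\int_0^1\big(h(r)-\chi(r)\big)\,r\,dr.
\]
Therefore $\eta\in[\chi]_Z$ if and only if the single scalar identity
\[
\int_0^1 h(r)\,r\,dr=\int_0^1\chi(r)\,r\,dr=\frac12\,\wt k\,s^2
\]
holds. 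The advantage of the radial ansatz is that all higher moments vanish automatically, so no density argument in $\qqde$ is needed; class membership is reduced to one linear constraint.

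Second, I would establish non-decreasability directly from positivity. Suppose $\nu\in[\chi]_Z$ satisfies $|\nu(z)|\le|\eta(z)|$ a.e., and put $g=\nu-\eta\in\mcn(\de)$. Since $\eta$ is real and positive, the inequality $|\eta+g|^2\le\eta^2$ reduces to $2\eta\,Re(g)+|g|^2\le0$, which forces $Re(g)\le0$ a.e. Because the constant $\vp\equiv1$ lies in $\qqde$, every element of $\mcn(\de)$ satisfies $\iint_\de g\,dxdy=0$, so $\iint_\de Re(g)\,dxdy=0$; together with $Re(g)\le0$ this gives $Re(g)=0$ a.e., whence $|g|^2\le0$ and $g=0$. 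Thus $\nu=\eta$, and each positive radial $\eta\in[\chi]_Z$ is non-decreasable. Infinitely many distinct such $\eta$ exist: the constraint $\int_0^1 h\,r\,dr=\frac12\wt k s^2$ prescribes the weighted average $\int_0^1 h\cdot 2r\,dr=\wt k s^2$, which is strictly below $\wt k$, so the profiles $h(r)=\wt k s^2+\eps\,(r-\frac23)$ satisfy the constraint for every small $\eps>0$ while remaining in $(0,\wt k)$, yielding distinct admissible $\eta$ with $\lnorm{\eta}<\wt k$.

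The step I expect to carry the real content is isolating the non-decreasability criterion in the second paragraph: the observation that strict positivity of $\eta$ on all of $\de$, combined with the single vanishing moment $\iint_\de g=0$ for infinitesimally trivial $g$, already precludes any decrease. Once this is recognized, the remaining work is routine — verifying the class condition via radial symmetry and the mean value property, and checking that $\esssup h<\wt k$ is compatible with the prescribed weighted average, which holds precisely because $\wt k s^2<\wt k$.
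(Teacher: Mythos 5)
Your proof is correct and complete as written. Note that the paper itself gives no argument for this lemma --- it simply defers to Lemma 7 of \cite{Yao2} --- so your write-up is a genuine self-contained substitute. All three steps check out: (i) the radial ansatz $\eta(z)=h(|z|)$ together with the mean value property $\int_0^{2\pi}\vp(re^{i\theta})\,d\theta=2\pi\vp(0)$ (valid for every $r<1$ and every $\vp\in\qqde$, with Fubini justified by $\lnorm{h}\,\|\vp\|<\infty$) correctly collapses the infinitely many equivalence conditions to the single moment $\int_0^1 h(r)\,r\,dr=\tfrac12\wt k s^2$, so no density or basis argument is needed; (ii) since $\vp\equiv1$ is integrable on $\de$ and hence lies in $\qqde$, any $\nu\in[\chi]_Z$ with $|\nu|\le\eta$ a.e.\ satisfies, with $g=\nu-\eta\in\mcn(\de)$, the pointwise inequality $2\eta\,Re(g)+|g|^2\le0$, whence $Re(g)\le0$ a.e., while $\iint_\de Re(g)\,dxdy=0$; this forces $Re(g)=0$ and then $|g|^2\le0$, so $g=0$ --- the strict positivity of $\eta$ is exactly what makes this work; (iii) the profiles $h(r)=\wt k s^2+\eps\,(r-\tfrac23)$ satisfy $\int_0^1(r-\tfrac23)\cdot 2r\,dr=0$, hence meet the moment constraint for every $\eps$, and for all sufficiently small $\eps>0$ they stay in $(0,\wt k)$, giving uncountably many distinct non-decreasable representatives with $\lnorm{\eta}<\wt k$. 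The positivity-plus-constant-test mechanism you isolate is the standard engine behind such non-decreasability statements on bounded domains, and is in the same spirit as the construction in the cited source; your version has the merit of making explicit that only the one linear constraint and the integrability of the constant quadratic differential are used.
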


\section{\!\!\!\!\!{. }
Proofs of Theorems \ref{Th:infweak0} and \ref{Th:infweak2}}\label{S:proof1}

\textbf{Proof of Theorem \ref{Th:infweak0}.}
Let $\mcc$ be a compact subset with empty interior and positive measure $meas(\mcc)\in (0,1)$.
Put  $\msc=\{re^{i\theta}:\; r\in \mcc,\; \theta\in [0,2\pi)\}$.
Then $\msc$ is 2-dimensional compact subset in $\de$ with empty interior and  $meas(\msc)\in (0,\pi)$.

Note that $\mcn(\de)=\boz$.  It is obvious that $0$ is the unique \nde dilatation in $\mcn(\de)$. We now show that,  for any given $\lambda>0$, $\mcn(\de)$ contains infinitely many  pseudo \nde dilatations $\nu$ with  $\lnorm{\nu}=\lambda$ and  the support set of  each  $\nu$ in $\de$ has empty interior.
 Fix a positive integer number $m$ and let
\begin{equation*}\gamma(z)=\begin{cases} \lambda \frac{z^m}{|z|^m},\;&z\in \msc,\\
0, \;&z\in \de\backslash \msc.
\end{cases}\end{equation*}

\textit{Claim.} $\gamma\in \mcn(\de)$ and is a pseudo \nde dilatation in $\boz$.

By the definition of $\mcn(\de)$, we need to show  that
\begin{equation*}
\iint_\md \gamma(z)\vp(z)\;dxdy=0,\text{ for any } \vp\in \qqde.\end{equation*}
Note that $\{1,z,z^2,\cdots, z^n,\cdots\}$ is a base of the Banach space $\qqde$. It suffices to prove
\begin{align}\label{Eq:zzn}
\iint_\de \gamma(z) z^n\;dxdy=0,\; \text{ for any } n\in \mn.
\end{align}
By the definition of $\mcc$, we see that the open set $\mathscr{A}=[0,1]\backslash \mcc$ is the union of countably many disjoint open intervals.  Set $\mathscr{D}=\{re^{i\theta}:\; r\in \mathscr{A},\; \theta\in [0,2\pi)\}$. It is clear that
$\de=\mathscr{D}\cup \msc$ and
\begin{align}\label{Eq:zzninf}
\iint_\de \gamma(z) z^n\;dxdy=\iint_{\mathscr{D}} \gamma(z) z^n\;dxdy,\; \text{ for any } n\in \mn.
\end{align}

Observe that $\mathscr{D}$ is the union of  countably many disjoint ring domains each of which can be written in the form $R=\{re^{i\theta}:\; r\in (x,x'),\; \theta\in [0,2\pi)\}$, $x,x'\in (0,\lambda)$.
A simple computation shows that
\begin{equation}\label{Eq:zzn1}
\begin{split}
&\iint_R \gamma(z) z^n\;dxdy=\iint_R \lambda\frac{z^m}{|z|^m} z^n\;dxdy\\
&=\lambda\int_0^1 r^{n+1}\;dr\int_{0}^{2\pi}e^{i(m+n)\theta}\;d\theta=0,\; \text{ for any } n\in \mn.
\end{split}\end{equation}
Hence, we get
\begin{align}\label{Eq:zzn3}
\iint_\mathscr{D} \gamma(z) z^n\;dxdy=0,\; \text{ for any } n\in \mn.
\end{align}
Thus, we have prove that $\gamma\in \mcn(\de)$. Since the support set of $\gamma$ in $\de$  has empty interior, by the definition $\gamma$ is a weakly \nde dilatation in $\boz$. On the other hand, it is obvious that  0 is the unique  \nde dilatation in $\boz$ and hence  $\gamma$ is a pseudo \nde dilatation.
  When $m$ varies over $\mn$ or the set $\mcc$ varies suitably, we obtain infinitely many pseudo \nde dilatations in $\boz$.
The completes the proof of Theorem \ref{Th:infweak0}.

\textbf{Proof of Theorem \ref{Th:infweak2}.}
Choose
$J_1=\{z\in \de:\;|z|<\frac{1}{4}\}$ and $J_2=\{z\in \de:\;|z-\frac{1}{2}|<\frac{1}{8}\}$. Let $A=\ov {J_1}\cup \ov {J_2}$. Let
$\mathcal{A}(z)$  be constructed by
the  Construction Theorem and let $\mu(z)=k\mathcal{A}(z)$ where
$k<1$ is a positive constant.  Let $\de_s=\{z\in  \de:\;|z|<s\}$ where $s\in (0,\frac{1}{4})$ and let $\wt k\in (0,k]$ be a constant.  Set
\begin{equation*}
\mu(z)=\begin{cases}k\mathcal{A}(z),\,&z\in \de\backslash  A,\\
\wt k\,&z\in \de_s,\\
0, \, & z\in
J_1-\de_s,\\
0, \, & z\in
J_2.
\end{cases}
\end{equation*}

By Theorem \ref{Th:infweak0}, $[0|_{J_2}]_Z$ contains infinitely many pseudo \nde dilatations and $0|_{J_2}$ is the unique \nde dilatation in $[0|_{J_2}]_Z$.
Applying  Lemma \ref{Th:lemext2} to $J_1$, we see that $[0|_{J_1}]_Z$ contains infinitely many  \nde dilatations with $L^\infty-$norm of at most $k$. By the foregoing reason, it derives readily  from Lemma \ref{Th:lemext1} that $\bmu$ is the desired \T class. The gives Theorem \ref{Th:infweak2}.

\section{\!\!\!\!\!{. }
Proofs of Theorems \ref{Th:pseudo0} and \ref{Th:pseudo1}}\label{S:proof}
To make the proof more concise, we  prove a new theorem from which Theorems \ref{Th:pseudo0} and \ref{Th:pseudo1} follows readily. We introduce the conception of non-landslide at first.

A  Beltrami differential $\mu$ (not necessarily extremal) in $Bel(S)$ is  said to be
 \emph{landslide } if there exists a non-empty open subset $G\subset S$ such
that
\begin{equation*}\esssup_{z\in
G}|\mu(z)|<\|\mu\|_\infty;\end{equation*} otherwise, $\mu$ is said
to be of \emph{non-landslide}.

Throughout the section, we denote by $\de(\zeta,r)$ the round disk $\{z:\;|z-\zeta|<r\}$ ($r>0$) and let $\msu_\lambda=\{\alpha\in \bmu:\;\lnorm{\alpha}\leq \lambda\}$.

\begin{theorem}\label{Th:pseudo2}Suppose  $\bmu\in \zde$ and $\lambda\geq\norm{\bmu}$.
If $\msu_\lambda$ contains a landslide Beltrami differential, then  $\msu_\lambda$ contains  infinitely many weakly \nde  dilatations.
\end{theorem}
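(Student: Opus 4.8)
The plan is to manufacture the required dilatations by grafting the empty-interior pseudo \nde bumps of Theorem \ref{Th:infweak0} into the ``slack region'' furnished by the landslide hypothesis, placing them on top of a weakly \nde background extracted by Lemma \ref{Th:lempseudo}. First I would fix a landslide $\alpha_0\in\msu_\lambda$ together with a nonempty open set $G\subset\de$ on which $c:=\esssup_{z\in G}|\alpha_0(z)|<\lnorm{\alpha_0}\le\lambda$, and apply Lemma \ref{Th:lempseudo} to $\chi=\alpha_0$ to obtain a weakly \nde $\beta\in\bmu$ with $|\beta|\le|\alpha_0|$ a.e. Then $\lnorm\beta\le\lambda$ and, decisively, $\esssup_G|\beta|\le c<\lambda$, so $\beta$ retains a definite margin to grow on $G$. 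I then pick a round disk $J=\de(\zeta,r)$ with $\ov J\subset G$ and $\de\setminus\ov J$ connected.

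Next, for each $m\in\mn$ I graft into $J$ a bump $\gamma_m$ of exactly the form used in Theorem \ref{Th:infweak0}: $\gamma_m$ is supported on a nowhere-dense compact set $\msc_m\subset J$ of positive measure, equals $(\lambda-c)\tfrac{(z-\zeta)^m}{|z-\zeta|^m}$ there, and is infinitesimally trivial on $J$, i.e. $\gamma_m\in\mcn(J)$. Setting $\nu_m:=\beta+\gamma_m$, Lemma \ref{Th:deform2} applied with the single Jordan domain $J$ yields $\nu_m\in\bmu$, while $|\nu_m|\le|\beta|+|\gamma_m|\le c+(\lambda-c)=\lambda$ on $\msc_m$ and $|\nu_m|=|\beta|\le\lambda$ off $\msc_m$, so $\nu_m\in\msu_\lambda$. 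Distinct $m$ give distinct $\nu_m$ (and one may enlarge the family by varying $\msc_m$ as well), so this produces infinitely many candidates; it remains only to certify that each is weakly \nde.

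The hard part will be this certification. Suppose some $\nu_m$ were strongly \dcs: there is $\eta\in\bmu$ with $|\eta|\le|\nu_m|$ a.e. and $|\eta|\le|\nu_m|-\delta$ on a domain $G'$, $\delta>0$. Since $\msc_m$ is nowhere dense, $G'\setminus\msc_m$ contains a ball $B$ on which $\nu_m=\beta$, so $|\eta|\le|\beta|-\delta$ on $B$; thus $\eta$ already displays the forbidden gap against $\beta$. The obstruction is that $\eta$ need not satisfy $|\eta|\le|\beta|$ on the bump set $\msc_m$, so $\eta$ is not yet a witness to the strong decreasability of $\beta$. My plan is to repair $\eta$ into such a witness $\theta$ by setting $\theta=\beta$ on $\msc_m$ and $\theta=\eta$ off $\msc_m\cup B$; this already forces $|\theta|\le|\beta|$ everywhere and preserves the gap on (a sub-ball of) $B$, the only damage being that $\theta$ has drifted out of $\bmu$ by the defect $\vp\mapsto\iint_{\msc_m}(\eta-\beta)\vp\,dxdy$, which I would cancel by a further small adjustment of $\theta$ on $B$, where the $\delta$ of slack leaves room to spare. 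A corrected $\theta\in\bmu$ dominated by $|\beta|$ with a genuine gap would contradict that $\beta$ is weakly \nde.

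The feasibility of that last adjustment is the real crux, and it rests on a no-concentration property of integrable holomorphic quadratic differentials: the fraction $\rho_m:=\sup_{\vp\in Q^1(J)}\iint_{\msc_m}|\vp|\,dxdy$ of $L^1$-mass that a unit differential can deposit on the nowhere-dense set $\msc_m$ is strictly less than $1$, and tends to $0$ as $meas(\msc_m)\to0$ (a normal-families argument, using that a differential which is $L^1$-small on the open set $J\setminus\msc_m$ is small throughout $J$). Choosing $\msc_m$ thin therefore makes the defect functional have small norm, which I would then absorb into the available $\delta$-gap on $B$ in the spirit of Lemma \ref{Th:cor}. I expect the most delicate bookkeeping to be the quantifier order here --- the thinness of $\msc_m$ is fixed before the adversarial $\eta$ (hence $\delta$) is revealed --- so the correction must be scaled to the defect, which is $O(\rho_m)$ uniformly in $\eta$, rather than to $\delta$; controlling how the defect is realized on $B$ against quadratic differentials that may nearly vanish on $B$ is where I anticipate the technical work to concentrate. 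Finally, the borderline case $\lambda=\norm{\bmu}$ forces $\alpha_0$, and hence $\beta$, to be extremal, but since every estimate above uses only $\esssup_G|\beta|\le c<\lambda$, the same $\nu_m$ serve and are automatically extremal.
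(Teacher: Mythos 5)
Your graft-and-certify strategy diverges from the paper's proof in a way that opens a genuine gap, and the gap sits exactly where you yourself flagged it: the certification step. The fatal point is the repair of the adversarial witness $\eta$ on the ball $B$. To push the corrected $\theta$ back into $\bmu$ you must find $g\in L^\infty(B)$ with $\lnorm{g}<\delta$ and
\begin{equation*}
\iint_B g\vp\,dxdy=\iint_{\msc_m}(\eta-\beta)\vp\,dxdy \quad\text{for every }\vp\in \qqde,
\end{equation*}
and by duality (Hahn--Banach factorization through the restriction map $\vp\mapsto\vp|_B$) such a $g$ exists if and only if $\bigl|\iint_{\msc_m}(\eta-\beta)\vp\,dxdy\bigr|\le \delta\iint_B|\vp|\,dxdy$ for \emph{all} $\vp\in\qqde$. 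This is a localized inequality, and the smallness of the defect in the dual norm of $\qqde$ (your $\rho_m$, which indeed tends to $0$ with $meas(\msc_m)$) does not imply it: the global dual norm and the ``norm measured against $\iint_B|\vp|$'' are inequivalent, with inequivalence constant $\sup_{\vp\in\qqde}\iint_{\msc_m}|\vp|\,dxdy\,\big/\iint_B|\vp|\,dxdy$ that is infinite in general. Concretely, if the adversary's gap region $G'$, hence your ball $B$, lies at smaller radius than $\msc_m$, then for $\vp_n=z^n$ this ratio grows geometrically in $n$, so a functional carried by $\msc_m$ with bounded density is in general not realizable by \emph{any} bounded density on $B$, let alone one of norm below $\delta$. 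Since $B$ and $\delta$ are revealed only after $\msc_m$ is fixed, no thinness of $\msc_m$ rescues the absorption step.

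There is also a structural reason to abandon direct certification: the grafted differential need not be weakly \nde at all, and the paper never claims it is. The paper's proof differs from yours in two decisive ways. First, it does not graft onto a merely small background: it applies Lemma \ref{Th:cor} to produce $\chi\in\bmu$ vanishing identically on a disk $\de(\zeta,\rho)\subset G$, and then Lemma \ref{Th:lempseudo} to get a weakly \nde $\nu$ with $|\nu|\le|\chi|$, hence $\nu\equiv 0$ on $\de(\zeta,\rho)$; the bump is inserted where the background is exactly zero, whereas your $\beta$ only satisfies $|\beta|\le c$ on $G$. Second, and crucially, instead of proving the grafted differential is weakly \nde, the paper runs a dichotomy: either it is weakly \nde (done), or Lemma \ref{Th:lempseudo} yields a weakly \nde $\beta'$ below it with a gap disk, and the zero background plus the nowhere-dense support force the gap disk out of $\de(\zeta,\rho)$ and force $\beta'|_D\notin[0|_D]_Z$ (the paper's Claim); either outcome produces a weakly \nde element that ``remembers'' the disk $D$, and running this over a sequence of disjoint disks $D_n\subset\de(\zeta,\rho)$ gives pairwise distinct weakly \nde elements of $\msu_\lambda$. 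That dichotomy is what your proposal is missing; with it, the repair machinery (and the no-concentration estimate it would require) becomes unnecessary.
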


\begin{proof}
If  $\bmu=\boz$ and  $\lambda=0$, nothing needs to prove. Now let
 $\lambda>0$.

 Let $\alpha$ be a landslide dilatation in $\msu_\lambda$. Then there is $\lambda'\in (0, \lambda)$ and a sub-domain  $G\subset \de$ such that
  $|\alpha(z)|\leq \lambda'$ on $G$.  Applying Lemma \ref{Th:cor} on $G$, we can find a Beltrami differential $\chi\in \msu_\lambda$ such that $\chi(z)=0$ on some small disk $\de(\zeta,\rho)\subset G$.

  By Lemma \ref{Th:lempseudo}, we can find a weakly \nde dilatation  $\nu\in \msu_\lambda$ such that $|\nu(z)|\leq |\chi(z)|$ a.e. on $\de$. It is obvious that $\nu(z)=0$ on $\de(\zeta, \rho)$.

  Let $D=\de(\xi, r)$ be a small round disk in $\de(\zeta,\rho)$, $r\in (0,\rho)$.  We regard $[0|_D]_Z$ as the basepoint in the infinitesimal \T space $Z(D)$. By Theorem \ref{Th:infweak0}, we may choose a pseudo \nde dilatation $\gamma\neq 0|_D$ in
  $[0|_D]_Z$ whose support set in $D$ has empty interior such that $\lnorm{\gamma}\leq \lambda$. Put
  \begin{equation*}\label{Eq:pps1}
\beta(z)=\begin{cases}\nu(z),\quad &z\in \de\backslash  \ov D,\\
\gamma(z), \quad & z\in D.
\end{cases}
\end{equation*}

If $\beta$ is a weakly \nde dilatation in $\msu_\lambda$, then let $\nu_D=\beta$.  In fact, $\nu_D$ is necessarily a pseudo \nde dilatation since $\beta|_D=\gamma$ is decreasble   but is not strongly decreasable in $[0|_D]_Z$.

Otherwise,   $\beta$ is not a weakly \nde dilatation in $\msu_\lambda$, and then by the proof of Lemma \ref{Th:lempseudo} there is a weakly \nde dilatation $\beta'$ in $\msu_\lambda$ such that \\
(1) $|\beta'(z)|\leq |\beta(z)|$,\\
(2) there exists a small round disk $\de(z',r')\subset \de$  and a positive number $\delta>0$ such that
\begin{equation*}
|\beta'(z)|\leq |\beta(z)|-\delta, \text{ for almost all  } z\in \de(z',r').
\end{equation*}
Since  the support set of $\beta$ on $\de(\zeta,\rho)$ has empty interior, it forces that $\de(z',r')\subset \de\backslash \de(\zeta,\rho)$.
Let $\nu'_D=\beta'$.

\textit{Claim.} $\beta'|_D\not\in [0|_D]_Z$ where $\beta'|_D$ is the restriction of $\beta'$ on $D$.

Suppose to the contrary. Then $\beta'|_D\in [0|_D]_Z$.
 Let
\begin{equation*}\label{Eq:ps1}
\wt\beta'(z)=\begin{cases}\beta'(z),\quad &z\in \de\backslash  \ov D,\\
0, \quad & z\in D.
\end{cases}
\end{equation*}
It is clear that $\wt\beta'\in \msu_\lambda$. Since $\beta(z)=\beta'(z)=0$ on  $\de(\zeta, \rho)\backslash D$, it is easy to verify that\\
(1) $\wt \beta'(z)=\nu(z)=0$ on $\de(\zeta,\rho)$,\\
(2) $|\wt\beta'(z)|\leq |\nu(z)|$,\\
(3) $|\wt\beta'(z)|<|\nu(z)|-\delta$ on $\de(z',r')$.\\
Thus, $\nu$ is strongly decreasable on $\de$, a contradiction. The claim is proved.

For convenience, let $\wt \nu_D$ denote either $\nu_D$ or $\nu'_D$.

 Now, let $\{D_n=\de(z_n,r_n)\}$ be a sequence of round disks in $\de(\zeta,\rho)$ which are mutually disjoint and $\{\gamma_n\in [0|_{D_n}]_Z:\;\gamma_n\neq 0|_{D_n}\}$ be a sequence of Beltrami differentials whose support sets have empty interior in $D_n$ respectively. By the previous analysis, we get either a pseudo  \nde dilatation $\nu_{D_n}$ or a weakly \nde dilatation $\nu'_{D_n}$ in $\msu_\lambda$. It is easy to check that whenever $n\neq m$, it holds that  $\wt \nu_{D_n}\neq \wt\nu_{D_m}$.
Thus, we get infinitely many weakly \nde dilatations  in $\msu_\lambda$.
\end{proof}

\begin{note} If the weakly \nde dilatation in  $\msu$
  is unique, then it is necessarily a \nde dilatation in $\bmu$.
  \end{note}

\textbf{Proof of Theorems \ref{Th:pseudo0} and \ref{Th:pseudo1}}. At first, let $\lambda>\norm{\bmu}$. It is evident that $\bmu$ contains a landslide Beltrami differential. Then by Theorem \ref{Th:pseudo2}, there are infinitely many weakly \nde dilatations in $\msu_\lambda$. The gives Theorem \ref{Th:pseudo0}.

Secondly, let $\lambda=\norm{\bmu}$. Then $\msu_\lambda$ just contains all extremal dilatations  in $\bmu$.  Now assume  $\#\msu_\lambda>1$. Then $\bmu$ contains infinitely many extremal dilatations.

\textit{Case 1.} Each extremal dilatation in $\bmu$ is non-landslide.

By the way, it is an open problem whether there exists $\bmu$ such that the extremal in $\bmu$ is not unique and each extremal in  $\bmu$ is non-landslide.
Anyway, by definition each extremal in $\bmu$ is weakly non-decreasable, and hence
 $\bmu$ contains either  infinitely many \nde extremal dilatations or infinitely many pseudo \nde extremal dilatations.

 \textit{Case 2.} $\bmu$ contains a landslide extremal dilatation.

 Then by Theorem \ref{Th:pseudo2}, there are infinitely many weakly \nde dilatations in $\msu_\lambda$.

 Theorem \ref{Th:pseudo1} now follows.

\renewcommand\refname{\centerline{\Large{R}\normalsize{EFERENCES}}}
\medskip

\end{document}